\newtheorem{theorem}{Theorem}[section]
\newtheorem{prop}[theorem]{Proposition}
\newtheorem{lemma}[theorem]{Lemma}
\newtheorem{cor}[theorem]{Corollary}
\newtheorem{claim}[theorem]{Claim}
\theoremstyle{definition}
\newtheorem*{defn-non}{Definition}
\newtheorem*{rmk}{Remark}
\newlist{Case}{enumerate}{2}
\setlist[Case, 1]{%
    label           =   {\bfseries Case \arabic*.},
    labelindent=1em ,labelwidth=1.3cm, labelsep*=1em, leftmargin =!
}
\setlist[Case, 2]{%
    label           =   {\bfseries Subcase \arabic{Casei}.\arabic*.},
    labelindent=-1em ,labelwidth=1.3cm, labelsep*=1em, leftmargin =!
}
\newenvironment{poc}{\begin{proof}[Proof of claim]}{\end{proof}}
\title{Euclidean Gallai-Ramsey for various configurations}
\author{
Xinbu Cheng\thanks{Laboratory of Mathematics and Complex Systems, Ministry of Education, School of Mathematical Sciences, Beijing Normal University, Beijing, China. Emails: chengxinbu2006@sina.com.}
\and
Zixiang Xu\thanks{Extremal Combinatorics and Probability Group (ECOPRO), Institute for Basic Science (IBS), Daejeon, South Korea. Email: zixiangxu@ibs.re.kr. Supported by IBS-R029-C4.}
}
\date{}
\begin{document}

\maketitle

\begin{abstract}
   The Euclidean Gallai-Ramsey problem, which investigates the existence of monochromatic or rainbow configurations in a colored $n$-dimensional Euclidean space $\mathbb{E}^{n}$, was introduced and studied recently. We further explore this problem for various configurations including triangles, squares, lines, and the structures with specific properties, such as rectangular and spherical configurations. Several of our new results provide refinements to the results presented in a recent work by Mao, Ozeki and Wang. One intriguing phenomenon evident on the Gallai-Ramsey results proven in this paper is that the dimensions of spaces are often independent of the number of colors. Our proofs primarily adopt a geometric perspective. 
\end{abstract}

\section{Introduction}
\subsection{Background}
The Euclidean Ramsey theory, as an important branch of Ramsey theory, was proposed by Erd\H{o}s, Graham, Montgomery, Rothschild, Spencer and Straus~\cite{1973JCTA} in 1975. Originating from graph theory and Ramsey theory, it aims to explore the presence of geometric patterns in continuous domains and investigates the intricate relationship between combinatorial and geometric structures within Euclidean space. Over the past fifty years, a multitude of profound results have been presented in Euclidean Ramsey theory, exemplified by notable works~\cite{2019DCGCONLON,conlon2022more,1990JAMS,2004FranklRodl,1980GrahamJCTA,2009Combinatorica,SHADER1976385} and the references therein, we also refer the readers to the great textbooks~\cite{1997HANDBOOK}. 

In 1967, Gallai~\cite{1967Gallai} proposed the Gallai-Ramsey problem, representing an intriguing branch of graph theory that investigates inherent order and structure within graph coloring. By building upon Ramsey theory and combinatorial principles, it seeks to ascertain the existence of monochromatic and rainbow structures within edge-colored complete graphs. The extremal problem associated with the Gallai-Ramsey problem has garnered significant attention, particularly in recent years, see~\cite{2020SIDMABalogh,2010Survey,2010JGTGyarfas,2021DMLiXihe,2020JGTLiu,2023EuJC}. 

Recently, Mao, Ozeki, and Wang~\cite{2022arxivEGR} combined the concepts of Euclidean Ramsey theory and the Gallai-Ramsey problem, and introduced the Euclidean Gallai-Ramsey problem, which aims to determine the existence of some monochromatic or rainbow configuration in a colored $n$-dimensional Euclidean space $\mathbb{E}^{n}$. Specifically, the problem mainly asks whether there exists an integer $n_{0}$ such that for any $r$-coloring of the points in $\mathbb{E}^{n}$ with $n\geqslant n_{0}$, there is always a monochromatic configuration congruent to $K_{1}$ or a rainbow configuration congruent to $K_{2}$, usually we also say that there is a monochromatic copy of $K_{1}$ or a rainbow copy of $K_{2}$.

In this paper, we further investigate the Euclidean Gallai-Ramsey problem for various configurations. For convenience, we introduce the notation $\mathbb{E}^n\overset{r}{\rightarrow} (K_{1};K_{2})_{\textup{GR}}$, which indicates the existence of either a monochromatic configuration congruent to $K_{1}$ or a rainbow configuration congruent to $K_{2}$ in any $r$-coloring of $\mathbb{E}^{n}$. Conversely, $\mathbb{E}^n\overset{r}{\nrightarrow} (K_{1};K_{2})_{\textup{GR}}$ means that there exists an $r$-coloring of $\mathbb{E}^{n}$ such that there is neither a monochromatic configuration congruent to $K_{1}$ nor a rainbow configuration congruent to $K_{2}$. Obviously, $\mathbb{E}^{n_{0}}\overset{r}{\rightarrow} (K_{1};K_{2})_{\textup{GR}}$ implies $\mathbb{E}^n\overset{r}{\rightarrow} (K_{1};K_{2})_{\textup{GR}}$ for any $n\ge n_{0}$. On the other hand, $\mathbb{E}^n\overset{r_{0}}{\nrightarrow} (K_{1};K_{2})_{\textup{GR}}$ implies $\mathbb{E}^n\overset{r}{\nrightarrow} (K_{1};K_{2})_{\textup{GR}}$ for any $r\ge r_{0}$, as one can only use $r_{0}$ many colors.

Mao, Ozeki and Wang~\cite{2022arxivEGR} initiated the study of this new problem and obtained the following results. 

\begin{theorem}[\cite{2022arxivEGR}]\label{thm:MaoWang}
The followings hold.
    \begin{itemize}
        \item[\textup{(1)}] Let $r$ be a positive integer and $T$ be a triangle with angles $30$, $60$ and $90$ degrees and with hypotenuse of unit length, $\mathbb{E}^3\overset{r}{\rightarrow} (T;T)_{\textup{GR}}$.

        \item[\textup{(2)}] Let $r$ be a positive integer and $Q$ be a rectangle, $\mathbb{E}^{13r+4}\overset{r}{\rightarrow} (Q;Q)_{\textup{GR}}$.

        \item[\textup{(3)}] Let $r\geqslant 3$ be a positive integer and $Q$ be a rectangle with side length $a$ and $b$, where $a\leqslant b\leqslant \sqrt{3}a$, then $\mathbb{E}^{2}\overset{r}{\nrightarrow} (Q;Q)_{\textup{GR}}$.
        \item[\textup{(4)}] Let $r$ be a positive integer, $K_{1}$ be a unit square and $K_{2}$ be an equilateral triangle with side length $1$, then $\mathbb{E}^{r+4}\overset{r}{\rightarrow} (K_{1};K_{2})_{\textup{GR}}$.

         \item[\textup{(5)}] Let $r$ be a positive integer, $K_{1}$ be a triangle with side lengths $1,1,\sqrt{2}$ and $K_{2}$ be an equilateral triangle of side length $1$, then $\mathbb{E}^{r+3}\overset{r}{\rightarrow} (K_{1};K_{2})_{\textup{GR}}$.
    \end{itemize}
\end{theorem}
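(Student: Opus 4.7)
The plan is to exhibit an explicit finite configuration in $\mathbb{E}^{r+3}$ on which the absence of a rainbow $K_{2}$ forces a monochromatic $K_{1}$ via a direct pigeonhole argument. I would begin by decomposing $\mathbb{E}^{r+3}$ as an orthogonal sum $\mathbb{E}^{2}\oplus\mathbb{E}^{r+1}$, placing in the first factor a unit equilateral triangle $\Delta=\{v_{0},v_{1},v_{2}\}$ and in the second a regular simplex $\Sigma=\{u_{0},u_{1},\ldots,u_{r+1}\}$ with $r+2$ vertices and unit edge length (such a simplex lives precisely in dimension $r+1$, so the dimensions match exactly). The configuration of interest is the ``grid'' $C=\{v_{i}+u_{a}: 0\le i\le 2,\ 0\le a\le r+1\}$ of $3(r+2)$ points in $\mathbb{E}^{r+3}$.

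Next I would verify, by a short Pythagorean computation, that two distinct points $v_{i}+u_{a}$ and $v_{j}+u_{b}$ in $C$ lie at distance $1$ whenever $i=j$ or $a=b$, and at distance $\sqrt{2}$ otherwise. Hence for each $a$ the ``row'' $R_{a}=\{v_{0}+u_{a},v_{1}+u_{a},v_{2}+u_{a}\}$ is a unit equilateral triangle (a copy of $K_{2}$), and for any $i\ne j$ and $a\ne b$ the L-shape $\{v_{i}+u_{a},v_{j}+u_{a},v_{i}+u_{b}\}$ has side lengths $1,1,\sqrt{2}$ (a copy of $K_{1}$, with the right angle at $v_{i}+u_{a}$).

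With the configuration in hand, I would suppose for contradiction that an $r$-coloring $\chi$ admits neither a rainbow $K_{2}$ nor a monochromatic $K_{1}$. Each of the $r+2$ rows $R_{a}$ then uses at most two colors, so by pigeonhole some color $c_{a}\in[r]$ appears at a set of positions $P_{a}\subseteq\{0,1,2\}$ with $|P_{a}|\ge 2$. Since $r+2>r$, two rows $a\ne b$ share a majority color $c$. From $|P_{a}|+|P_{b}|\ge 4>3$ one extracts $i\in P_{a}\cap P_{b}$, and from $|P_{a}|\ge 2$ some $j\in P_{a}\setminus\{i\}$; the L-shape $\{v_{i}+u_{a},v_{j}+u_{a},v_{i}+u_{b}\}$ is then monochromatic of color $c$ and is a copy of $K_{1}$, the desired contradiction.

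The main obstacle is spotting the right finite configuration: once one sees that a Cartesian product of a triangle with a unit-edge simplex placed in orthogonal subspaces simultaneously realizes $K_{2}$ in each row and $K_{1}$ in every cross-row L-shape, the dimension budget $r+3=2+(r+1)$ matches the pigeonhole threshold $r+2>r$ exactly, and no further tools are needed.
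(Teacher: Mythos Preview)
Your argument for part~(5) is correct and quite clean: the orthogonal product of a unit equilateral triangle in $\mathbb{E}^{2}$ with a unit-edge regular $(r{+}1)$-simplex in $\mathbb{E}^{r+1}$ yields exactly the finite configuration needed, the distance computation is right, and the pigeonhole on majority colours across the $r{+}2$ rows (followed by the $|P_{a}|+|P_{b}|\ge 4>3$ intersection step) is airtight.

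Two remarks, however. First, the statement has five parts and you address only part~(5); parts~(1)--(4) are left entirely untouched. Second, this theorem is not proved in the present paper at all---it is quoted from Mao, Ozeki and Wang~\cite{2022arxivEGR} as prior work, so there is no ``paper's own proof'' to compare against. The paper merely remarks that the original proofs of (4) and (5) proceed by reduction to graph Gallai--Ramsey results. Your argument is in the same spirit---the grid $C$ is exactly the kind of configuration one uses to transplant a Gallai-type edge-colouring pattern into Euclidean space---but you have made it fully self-contained, invoking nothing beyond pigeonhole on $r{+}2$ labels in $[r]$ and an elementary set intersection. That is a modest expository gain, though the underlying combinatorics is almost certainly what the cited graph-theoretic results encode.
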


The proofs of Theorem~\ref{thm:MaoWang}(1)-(3) mainly utilize geometric arguments, while the remaining results are proven by establishing their connection with graph Gallai-Ramsey problems.

\subsection{Our contributions}
The primary objective of the Euclidean Gallai-Ramsey problem is to investigate the conditions under which $\mathbb{E}^n\overset{r}{\rightarrow} (K_{1};K_{2})_{\textup{GR}}$ or $\mathbb{E}^n\overset{r}{\nrightarrow} (K_{1};K_{2})_{\textup{GR}}$ hold for certain configurations $K_{1}, K_{2}$, dimension $n$, and number of colors $r$. We mainly establish a geometric perspective on this problem and obtain several new results, some of which refine the corresponding results in Theorem~\ref{thm:MaoWang}. To aid readers' comprehension, we first introduce relevant notations and definitions that will be utilized later. We will use $\boldsymbol{x}=(x_{1},x_{2},\ldots,x_{n})$ to represent a point in $\mathbb{E}^{n}$. For a pair of points $\boldsymbol{x}$ and $\boldsymbol{y}$, we denote $|\boldsymbol{xy}|$ as the Euclidean distance between $\boldsymbol{x}$ and $\boldsymbol{y}$, and sometimes we use $|\boldsymbol{x}|$ when referring to the distance between $\boldsymbol{x}$ and the origin. A sphere centered at $\boldsymbol{a}=(a_{1},a_{2},\ldots,a_{n})$ with radius $\rho$ is defined as a set of points $\boldsymbol{y}:=(y_{1},y_{2},\ldots,y_{n})$ in $\mathbb{E}^n$ satisfying the equation $\sum_{i=0}^{n}(y_{i}-a_{i})^2=\rho^{2}$. In the context of some affine space in $\mathbb{E}^n$, a sphere refers to the intersection of the sphere and the affine space. We say that a configuration $X$ is \emph{spherical} if it lies on the surface of a sphere. The \emph{circumradius} $\rho(X)$ of a spherical set $X$ represents the smallest radius among all spheres that contain $X$ as a subset. 
We say a simplex $\Delta$ formed by points $\boldsymbol{x}_{1},\boldsymbol{x}_{2},\ldots,\boldsymbol{x}_{k+2}$ with a sequence of heights $h_{1},h_{2},\ldots,h_{k}$, if for each $3\leqslant i\leqslant k+2$, the distance from $\boldsymbol{x}_{i}$ to the affine space spanned by ${\boldsymbol{x}_{1},\boldsymbol{x}_{2},\ldots,\boldsymbol{x}_{i-1}}$ is $h_{i-2}$. For instance, the height of a triangle $\boldsymbol{abc}$ refers to the distance between $\boldsymbol{c}$ and the line $\boldsymbol{ab}$. Furthermore, a configuration $X$ is classified as rectangular if it is a subset of the vertices of a rectangular parallelepiped. We also employ some notations on the classical Euclidean Ramsey problems~\cite{2018Handbook}. For $X\subseteq\mathbb{E}^{n}$, $\mathbb{E}^n\overset{r}{\rightarrow} X$ means for any $r$ coloring of $\mathbb{E}^{n}$, there is a monochromatic configuration congruent to $X$. Let $\mathbb{S}^{n}(h)$ be the $n$-dimensional sphere with radius $h$. Similarly, $\mathbb{S}^{n}(h)\overset{r}{\rightarrow} X$ means that for any $r$ coloring of $\mathbb{S}^{n}(h)$, there is a monochromatic configuration congruent to $X$.

Our first result extends the result in Theorem~\ref{thm:MaoWang}~(1) from a special right triangle to an arbitrary right triangle. 

\begin{theorem}\label{thm:rightTriangle}
Let $T$ be a fixed right triangle. For any positive integer $r$, we have
\begin{equation*}
     \mathbb{E}^{3}\overset{r}{\rightarrow} (T;T)_{\textup{GR}}.
\end{equation*}

\end{theorem}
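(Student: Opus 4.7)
The plan is to work inside the 2-sphere $\mathbb{S}^{2}(c/2)\subset\mathbb{E}^{3}$, where $c$ denotes the length of the hypotenuse of $T$, and to exploit the abundant supply of copies of $T$ inscribed in it via Thales's theorem. Let $T$ have legs of lengths $a$ and $b$, so that $a^{2}+b^{2}=c^{2}$. The cases $r\le 2$ are handled immediately: $r=1$ is trivial, and for $r=2$ no rainbow copy is possible while Shader's classical theorem, which states $\mathbb{E}^{2}\overset{2}{\rightarrow}T$ for every right triangle $T$, provides a monochromatic copy. So I assume $r\ge 3$ and argue by contradiction, supposing the given $r$-coloring $\chi$ of $\mathbb{E}^{3}$ admits neither a monochromatic nor a rainbow copy of $T$.

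First I would record the central geometric setup. Set $S=\mathbb{S}^{2}(c/2)$. By Thales's theorem, for any antipodal pair $\boldsymbol{A},\boldsymbol{A}^{\ast}\in S$ (so $|\boldsymbol{A}\boldsymbol{A}^{\ast}|=c$) together with any $\boldsymbol{B}\in S$, the triangle $\{\boldsymbol{A},\boldsymbol{A}^{\ast},\boldsymbol{B}\}$ is right-angled at $\boldsymbol{B}$, and is congruent to $T$ precisely when $|\boldsymbol{AB}|\in\{a,b\}$. The locus of such $\boldsymbol{B}$'s is a pair of parallel small circles on $S$, each of radius $ab/c$. Under the no-monochromatic / no-rainbow assumption, these circles must avoid the color $\chi(\boldsymbol{A})$ whenever $\chi(\boldsymbol{A})=\chi(\boldsymbol{A}^{\ast})$, and otherwise are forced to use only the two colors $\chi(\boldsymbol{A})$ and $\chi(\boldsymbol{A}^{\ast})$.

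Next I would exploit the rectangle structure inscribed in $S$: for any two antipodal pairs $\{\boldsymbol{A},\boldsymbol{A}^{\ast}\}$ and $\{\boldsymbol{B},\boldsymbol{B}^{\ast}\}$ on $S$ with $|\boldsymbol{AB}|=a$, the four points form an $a\times b$ rectangle whose four three-element subsets are all copies of $T$. Case analysis forces such a rectangle to use exactly two colors, each appearing on two of its four vertices, in one of only three possible pairings. The crux of the proof is then to chain these rectangles together, using the two-parameter family of antipodal pairs on $S$ together with the associated small circles, to propagate the 2-color constraint across a 2-dimensional subset of $\mathbb{E}^{3}$. Once the coloring is reduced to two colors on a plane, Shader's theorem applied inside that plane yields a monochromatic copy of $T$, contradicting the assumption.

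The main obstacle is this propagation step and, in particular, the degenerate case in which the antipodal map on $S$ preserves all colors. In that case $\chi$ descends to a coloring of $\mathbb{RP}^{2}$, and the hypothesis becomes: no two points of $S$ at distance $a$ (nor at distance $b$) can share a color. To break this case, one must invoke additional copies of $T$ whose vertices are allowed to leave $S$, leveraging the full three-dimensional ambient space via a pigeonhole argument on a carefully chosen finite configuration built around $S$; the rigidity coming from requiring each such off-sphere triple to avoid being both monochromatic and rainbow should then overconstrain the coloring.
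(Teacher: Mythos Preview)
Your proposal is not a proof but a plan whose two hardest steps are left blank. You assert that ``chaining'' inscribed $a\times b$ rectangles on $S=\mathbb{S}^{2}(c/2)$ will propagate a $2$-color constraint across a plane, but you neither exhibit a chain nor argue that the \emph{same} two colors persist from one rectangle to the next: a fresh differently-colored antipodal pair may well introduce a new pair of colors, so nothing you have written prevents the union of your rectangles from using three or more colors. Even granting the propagation, your $2$-colored region would naturally live on $S$, not on a flat plane, and Shader's theorem is for $\mathbb{E}^{2}$; you give no mechanism for flattening. Finally, in your ``degenerate'' case (the antipodal map on $S$ preserves $\chi$) the hypothesis reduces to a proper coloring of the distance-$a$ and distance-$b$ graphs on $S$; such colorings certainly exist with finitely many colors, so no contradiction is available on $S$ alone, and your promised ``pigeonhole argument on a carefully chosen finite configuration'' which ``should then overconstrain the coloring'' names no configuration and performs no count. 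That sentence is precisely the missing proof.

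The paper's argument is quite different and avoids the sphere entirely. Assuming only that there is no rainbow $T$, it fixes two differently-colored points at distance $a$ along the $x$-axis, say $\boldsymbol{0}$ red and $(a,0,0)$ blue, and observes that every point of the circle of radius $b$ about $\boldsymbol{0}$ in the plane $\{x=0\}$ completes a copy of $T$ with those two points and is therefore red or blue; the companion circle in $\{x=a\}$ is likewise red or blue. Iterating with the newly obtained red/blue points as centres, one covers successively larger disks in $\{x=0\}$ until the entire plane is seen to be $2$-colored, and then Shader's theorem applied to this genuine copy of $\mathbb{E}^{2}$ yields a monochromatic $T$. The feature your approach lacks is a fixed flat target together with a fixed pair of colors: by rotating about an axis orthogonal to a single plane, the paper keeps both the ambient $\mathbb{E}^{2}$ and the color set $\{\text{red},\text{blue}\}$ constant throughout, so the propagation is automatic and no antipodal or case analysis is needed.
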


Our proof of Theorem~\ref{thm:rightTriangle} introduces a more general approach, namely, the rotation method, which can be explored further. Next, we generalize Theorem~\ref{thm:rightTriangle} as follows.

\begin{theorem}\label{thm:off-triangle}
    Let $T_{h}$ be a triangle with some height $h$. Suppose $X\subseteq 
 \mathbb{E}^{n}$ satisfies $\mathbb{S}^{n-2}(h)\overset{2}{\rightarrow} X$,
then for any positive integer $r$, we have
\begin{equation*}
     \mathbb{E}^n\overset{r}{\rightarrow} (X;T_{h})_{\textup{GR}}.
\end{equation*}
\end{theorem}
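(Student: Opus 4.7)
The plan is to reduce to the hypothesis $\mathbb{S}^{n-2}(h)\overset{2}{\rightarrow} X$ by identifying, for each candidate base of $T_h$, an $(n-2)$-sphere of radius $h$ consisting of valid third vertices. Fix the side of $T_h$ opposite the altitude of length $h$, and denote its length by $c$. For any pair of points $\boldsymbol{A},\boldsymbol{B}\in\mathbb{E}^n$ with $|\boldsymbol{AB}|=c$, the set of points $\boldsymbol{C}\in\mathbb{E}^n$ making $\{\boldsymbol{A},\boldsymbol{B},\boldsymbol{C}\}$ congruent to $T_h$ (with $\boldsymbol{AB}$ in the role of the chosen base) contains an $(n-2)$-sphere of radius $h$: the foot of the perpendicular from $\boldsymbol{C}$ to line $\boldsymbol{AB}$ is one of at most two points determined by the remaining side lengths of $T_h$, and $\boldsymbol{C}$ then ranges over the radius-$h$ sphere centered at such a foot inside the hyperplane perpendicular to $\boldsymbol{AB}$. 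Any one such sphere is isometric to $\mathbb{S}^{n-2}(h)$.

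Assuming an $r$-coloring of $\mathbb{E}^n$ contains no rainbow copy of $T_h$, I would split into two cases. First, if there exist two points $\boldsymbol{A},\boldsymbol{B}$ at distance $c$ with distinct colors, then for every point $\boldsymbol{C}$ of an associated $(n-2)$-sphere as above, the triple $\{\boldsymbol{A},\boldsymbol{B},\boldsymbol{C}\}$ is a copy of $T_h$, so avoiding rainbow forces the color of $\boldsymbol{C}$ to match that of $\boldsymbol{A}$ or of $\boldsymbol{B}$. The sphere is therefore $2$-colored, and the hypothesis $\mathbb{S}^{n-2}(h)\overset{2}{\rightarrow} X$ directly yields a monochromatic congruent copy of $X$. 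Second, if every pair of points at distance $c$ shares a color, then a standard connectedness argument for the $c$-distance graph on $\mathbb{E}^n$ (for $n\ge 2$, any two points can be joined by a finite chain of hops of length $c$) forces the whole space to be monochromatic, so any embedded copy of $X$ is trivially monochromatic.

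The argument is structurally clean, and the main conceptual point, really the only obstacle, is the geometric observation that once a base side of $T_h$ is singled out, the locus of valid third vertices in $\mathbb{E}^n$ is a union of at most two $(n-2)$-spheres of radius exactly $h$. This matches the hypothesis precisely and allows Case 1 to invoke $\mathbb{S}^{n-2}(h)\overset{2}{\rightarrow} X$ verbatim. The remaining step, the connectedness of the $c$-distance graph in Case 2, is a routine zigzag construction and works for all $n\ge 2$.
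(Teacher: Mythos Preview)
Your proposal is correct and follows essentially the same route as the paper: identify an $(n-2)$-sphere of radius $h$ of candidate apex points over a fixed base of length $c$, force it to be $2$-colored in the absence of a rainbow $T_h$, and apply the hypothesis. Your Case~2 connectedness argument is exactly the paper's Proposition~\ref{prop:TwoPoint}, which the paper invokes as a black box rather than re-proving inline; otherwise the arguments coincide.
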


It is well-known that the condition $\mathbb{S}^{n-2}(h)\overset{2}{\rightarrow} X$ is easily satisfied when $h$ is large, see~\cite{1997HANDBOOK,1995book,nechushtan2002space}. However, when $h$ is relatively small, Lov\'{a}sz~\cite{1983Lovasz} demonstrated that $\mathbb{S}^{n}(h)\overset{n}{\rightarrow} \ell_{2}$ holds for $h> 1/2$, where $\ell_{2}$ is a two-point set which forms a line segment of length $1$. Additionally, Frankl and R\"{o}dl~\cite{1990JAMS} explored the super-Ramsey property and provided results specifically for rectangular sets. Later, Matou\v{s}ek and R\"{o}dl~\cite{MATOUSEK199530} obtained similar results for any simplex. For a comprehensive understanding of these problems, we recommend consulting the excellent textbook~\cite{1997HANDBOOK}.

\begin{theorem}[\cite{1990JAMS,MATOUSEK199530}]\label{thm:RodlFrankl}
The followings hold.
\begin{itemize}
    \item[\textup{(1)}] If $X$ is rectangular and $\rho (X)=\rho $, then for any integer $r$ and real number $\varepsilon > 0$, there exists $n= n(X,r,\varepsilon )$ such that
\begin{equation*}
     \mathbb{S}^{n}(\rho+\varepsilon )\overset{r}{\rightarrow} X.
\end{equation*}

  \item[\textup{(2)}] For any simplex $X$ with $\rho(X)=\rho$, any integer $r$ and any real number $\varepsilon>0$, there exists $n= n(X,r,\varepsilon )$ such that
\begin{equation*}
     \mathbb{S}^{n}(\rho+\varepsilon )\overset{r}{\rightarrow} X.
\end{equation*}
\end{itemize}

\end{theorem}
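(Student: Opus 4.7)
The plan is to reduce both parts to a common \emph{super-Ramsey} lemma for distances on high-dimensional spheres, and to prove that lemma using the Frankl--Wilson modular intersection theorem. The super-Ramsey lemma I aim for is the following: for any $d \in (0, 2(\rho+\varepsilon))$ and any integer $r$, one can choose $n$ so large that every $r$-coloring of $\mathbb{S}^{n}(\rho+\varepsilon)$ contains not just one monochromatic pair at distance $d$, but an arbitrarily large monochromatic set with a prescribed pairwise-distance pattern. The proof parametrizes a well-separated point set on the sphere by $k$-subsets of $[N]$, recasts the distance constraints as congruence conditions on the sizes of pairwise intersections, and invokes Frankl--Wilson to bound any monochromatic (forbidden-intersection-avoiding) subfamily by a quantity exponentially smaller than $\binom{N}{k}$; pigeonholing across $r$ colors then forces a monochromatic clique of the required size inside some color class.

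For part $(1)$, the rectangular set $X$ may be enlarged to the full vertex set of its circumscribing parallelepiped, so I assume $X = \{a_{1},b_{1}\} \times \cdots \times \{a_{k},b_{k}\}$ and induct on $k$. The base case $k=1$ is an instance of the super-Ramsey lemma applied to the segment $\{a_{1},b_{1}\}$, whose length is at most $2\rho$. For the inductive step, split $\mathbb{E}^{n} = \mathbb{E}^{n_{1}} \oplus \mathbb{E}^{n_{2}}$ so that $\mathbb{S}^{n}(\rho+\varepsilon)$ contains a join $\mathbb{S}^{n_{1}}(\rho_{1}) \times \mathbb{S}^{n_{2}}(\rho_{2})$ in which each radius $\rho_{i}$ slightly exceeds the circumradius of the corresponding sub-rectangle. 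Apply the super-Ramsey lemma on each $\mathbb{E}^{n_{2}}$-fiber to produce a monochromatic segment congruent to $\{a_{k},b_{k}\}$; the \emph{location} of such a segment within the fiber then defines a new coloring of the $\mathbb{E}^{n_{1}}$-base by a bounded number of colors, to which the inductive hypothesis supplies a monochromatic copy of the $(k-1)$-dimensional rectangle, yielding a full monochromatic copy of $X$.

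For part $(2)$, the simplex $X = \{v_{0}, v_{1}, \dots, v_{k}\}$ is built vertex by vertex on $\mathbb{S}^{n}(\rho+\varepsilon)$. Having placed $v_{0}, \dots, v_{i}$ in the same color class, the admissible locus for the next vertex is the intersection of $\mathbb{S}^{n}(\rho+\varepsilon)$ with $i+1$ prescribed-distance spheres, which is again a sphere of dimension $n-i-1$ whose radius is only slightly less than $\rho+\varepsilon$. One iterates the super-Ramsey lemma on these shrinking subspheres, choosing the starting dimension large enough that enough ``$\varepsilon$-margin'' survives all $k$ rounds. The main obstacle is precisely this margin bookkeeping: the residual radius at step $i$ must still exceed the circumradius of the remaining portion of the simplex, and the $\varepsilon$-budget has to be allocated across the $k$ stages so that no step falls below the threshold. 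The strength of the Frankl--Wilson bound, which yields a chromatic number super-polynomial in the dimension, is what allows each inductive step to succeed while consuming only a small fraction of the available margin and so closes the induction.
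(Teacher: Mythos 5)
First, a point of context: the paper does not prove this theorem; it is imported verbatim from Frankl--R\"odl and Matou\v{s}ek--R\"odl, so your sketch can only be measured against the arguments in those sources. Measured that way, the central step of your plan does not hold up. Your ``super-Ramsey lemma'' --- that every $r$-coloring of $\mathbb{S}^{n}(\rho+\varepsilon)$ contains an arbitrarily large monochromatic set realizing a \emph{prescribed pairwise-distance pattern} --- is not a lemma but is essentially the full strength of part (2), since a simplex \emph{is} a prescribed pairwise-distance pattern; invoking it to build the simplex is circular. Nor does Frankl--Wilson deliver it: that theorem forbids a single intersection size (modulo a prime) and, combined with pigeonhole, produces one monochromatic \emph{pair} at one distance, not a clique in which several different prescribed distances are realized simultaneously. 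Upgrading ``one forbidden distance'' to ``a whole simplex'' is precisely the content of the Frankl--R\"odl partition theorem for simplices and required genuinely new machinery (the non-modular forbidden-intersection theorem and the method of spreads); your sketch compresses this into a pigeonhole remark. The vertex-by-vertex scheme for part (2) has the further problem, which you flag but do not resolve, that after fixing one monochromatic partial simplex the admissible subsphere for the next vertex may avoid that color class entirely; the actual proofs strengthen the inductive statement (finding many partial copies, or copies with controlled position) rather than iterating naively, in addition to the $\varepsilon$-budget bookkeeping you defer.

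Part (1) is closer to a correct reconstruction: decomposing $\mathbb{S}^{n}(\rho+\varepsilon)\supseteq \mathbb{S}^{n_1}(\rho_1)\times\mathbb{S}^{n_2}(\rho_2)$ with $\rho_1^2+\rho_2^2=(\rho+\varepsilon)^2$ and each $\rho_i$ exceeding the circumradius of the corresponding factor is the standard route, and the two-point base case on a sphere of radius exceeding half the segment length is known (Lov\'asz's theorem, cited in this paper, or a Kneser/Frankl--Wilson-type bound). But two repairs are needed. The induced coloring of the base by ``the location of a monochromatic segment in the fiber'' ranges over a continuum; you must first extract, by compactness, a \emph{finite} set $Y\subseteq\mathbb{S}^{n_2}(\rho_2)$ such that every $r$-coloring of $Y$ contains a monochromatic $\{a_k,b_k\}$, and then color each base point by the restriction of the coloring to its copy of $Y$, which uses at most $r^{\lvert Y\rvert}$ colors. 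Also, replacing $X$ by the full vertex set of a circumscribing parallelepiped can strictly increase the circumradius (two adjacent vertices of a unit square have circumradius $1/2$, while the square has circumradius $\sqrt{2}/2$), so that reduction does not preserve the radius $\rho(X)+\varepsilon$ claimed in the statement, and proper subsets of bricks need a separate argument.
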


Combing Theorem~\ref{thm:off-triangle} and Theorem~\ref{thm:RodlFrankl}, we can obtain the following corollary.

\begin{cor}\label{cor:sphere}
 Let $T_{h}$ be a triangle with height $h$ and $X$ be rectangular (or a simplex) with circumradius less than $h$. There exists some $n=n(X,h)$ such that for any positive integer $r$,
 \begin{equation*}
     \mathbb{E}^n\overset{r}{\rightarrow} (X;T_{h})_{\textup{GR}}.
\end{equation*}
In particular, for any triangle $T_h$ with circumradius less than $h$, there exists some $n=n(T_{h})$ such that for any positive integer $r$,
 \begin{equation*}
     \mathbb{E}^n\overset{r}{\rightarrow} (T_{h};T_{h})_{\textup{GR}}.
\end{equation*}

\end{cor}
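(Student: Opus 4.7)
The plan is to combine Theorem~\ref{thm:off-triangle} with Theorem~\ref{thm:RodlFrankl} in the most direct way, extracting slack from the strict inequality $\rho(X) < h$.

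First I would set $\varepsilon := h - \rho(X) > 0$ and apply Theorem~\ref{thm:RodlFrankl} to $X$ with this $\varepsilon$ and with the number of colors fixed at $2$ (not at $r$). This choice is essential: Theorem~\ref{thm:off-triangle} only requires the $2$-color Ramsey statement $\mathbb{S}^{n-2}(h)\overset{2}{\rightarrow} X$, so it suffices to secure this. Since $X$ is rectangular or a simplex, Theorem~\ref{thm:RodlFrankl} provides an integer $m = n(X,2,\varepsilon)$ such that $\mathbb{S}^{m}(h) = \mathbb{S}^{m}(\rho(X)+\varepsilon) \overset{2}{\rightarrow} X$. Crucially, $m$ depends only on $X$ and $h$, with no dependence on $r$.

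Second, I would set $n := m + 2$, so that $\mathbb{S}^{n-2}(h) \overset{2}{\rightarrow} X$. Applying Theorem~\ref{thm:off-triangle} with this $X$ and the triangle $T_{h}$ of height $h$ then yields $\mathbb{E}^{n}\overset{r}{\rightarrow}(X;T_{h})_{\textup{GR}}$ for every positive integer $r$. The obtained $n = n(X,h)$ is independent of $r$, matching the intriguing phenomenon highlighted in the abstract. For the ``in particular'' clause, a triangle is a $2$-dimensional simplex, so we may specialize the argument above by taking $X = T_{h}$; the assumption $\rho(T_{h}) < h$ is exactly what is needed to feed into the first step.

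There is no substantive obstacle here: the corollary is essentially a bookkeeping combination of the two theorems. The only point worth emphasizing is that one must invoke Theorem~\ref{thm:RodlFrankl} with the color count set to $2$ rather than to $r$, since this is precisely what allows the final dimension $n(X,h)$ to be independent of the number of colors in the Gallai--Ramsey conclusion.
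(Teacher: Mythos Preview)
Your proposal is correct and matches the paper's approach exactly: the paper merely states that the corollary follows by combining Theorem~\ref{thm:off-triangle} with Theorem~\ref{thm:RodlFrankl}, and your write-up spells out precisely this combination, including the key observation that Theorem~\ref{thm:RodlFrankl} need only be invoked with two colors so that the resulting dimension is independent of $r$.
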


\begin{rmk}
Comparing with Theorem~\ref{thm:MaoWang}~(4) and~(5), the result in Corollary~\ref{cor:sphere} is more general, for instance, instead of being limited to special triangles or rectangles, we can now set $K_{1}$ to be an arbitrary simplex or rectangle. Furthermore, we can set $K_{2}$ to be a triangle with a height of $h$, rather than being restricted to equilateral triangles. What is even more important is that the dimensions in Theorem~\ref{thm:MaoWang}~(4) and~(5) grow in terms of the number of colors used, while in Corollary~\ref{cor:sphere}, the dimension can be an absolute number that solely depends on the configuration $X$ and the value of $h$. This means that the dimension in Corollary~\ref{cor:sphere} remains constant, regardless of the number of colors being employed.

\end{rmk}

Furthermore, we can extend the scope of our results to encompass general simplices.

\begin{theorem}\label{thm:highsimplex}
Let $r$ and $k$ be positive integers. Let $\Delta $ be a simplex with a sequence of heights $h_{1},h_{2},\ldots,h_{k}$, suppose that $X$ satisfies $ \mathbb{S}^{n-1-j}(h_{j})\overset{j+1}{\rightarrow} X $
for each $1\leq j\leq k$, then we have
\begin{equation*}
    \mathbb{E}^n\overset{r}{\rightarrow} (X;\Delta)_{\textup{GR}}.
\end{equation*}
\end{theorem}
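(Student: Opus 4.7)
The plan is to extend the iterative extension strategy behind Theorem~\ref{thm:off-triangle} (essentially the $k=1$ case) to simplices on $k+2$ vertices. Fix any $r$-coloring $\chi$ of $\mathbb{E}^n$ and assume, toward contradiction, that there is no monochromatic copy of $X$; I will construct a rainbow copy of $\Delta$ by placing its vertices $\boldsymbol{x}_1,\ldots,\boldsymbol{x}_{k+2}$ one at a time so that each receives a color not used before.

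To obtain the initial edge, let $d=|\boldsymbol{x}_1\boldsymbol{x}_2|$ in $\Delta$. If every pair of points in $\mathbb{E}^n$ at distance $d$ were monochromatic, a standard chaining argument (valid since $n\ge k+1\ge 2$) would force the whole space to be monochromatic; then any copy of $X$ in $\mathbb{E}^n$, which exists because $X\subset\mathbb{S}^{n-2}(h_1)\subset\mathbb{E}^n$ by the $j=1$ hypothesis, would be monochromatic, contradicting the standing assumption. Hence I may choose $\boldsymbol{x}_1,\boldsymbol{x}_2$ at distance $d$ with $\chi(\boldsymbol{x}_1)=c_1\neq c_2=\chi(\boldsymbol{x}_2)$.

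Now suppose inductively that $\boldsymbol{x}_1,\ldots,\boldsymbol{x}_{j+1}$ are congruent to the first $j+1$ vertices of $\Delta$ and carry pairwise distinct colors $c_1,\ldots,c_{j+1}$, for some $1\le j\le k$. The geometric core of the argument is that the set of points $\boldsymbol{y}\in\mathbb{E}^n$ such that $\boldsymbol{x}_1,\ldots,\boldsymbol{x}_{j+1},\boldsymbol{y}$ is congruent to the first $j+2$ vertices of $\Delta$ is a sphere congruent to $\mathbb{S}^{n-1-j}(h_j)$: it lies in the $(n-j)$-dimensional affine subspace perpendicular to the affine hull of $\boldsymbol{x}_1,\ldots,\boldsymbol{x}_{j+1}$, and the height condition on $\Delta$ forces its radius to equal $h_j$. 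If every point of this sphere were colored from $\{c_1,\ldots,c_{j+1}\}$, the induced $(j+1)$-coloring together with the hypothesis $\mathbb{S}^{n-1-j}(h_j)\overset{j+1}{\rightarrow}X$ would yield a monochromatic copy of $X$, contradicting the standing assumption. Hence some point on the sphere carries a fresh color $c_{j+2}\notin\{c_1,\ldots,c_{j+1}\}$, which I take as $\boldsymbol{x}_{j+2}$.

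Iterating this for $j=1,\ldots,k$ produces a rainbow copy of $\Delta$, finishing the proof. The main hurdle is conceptual rather than technical: it is the clean identification of the admissible locus at stage $j$ with $\mathbb{S}^{n-1-j}(h_j)$, which explains why the dimensions and radii in the hypothesis are calibrated precisely as they are and lets the sphere-Ramsey hypotheses plug into the induction without slack. Once this identification is in hand, the whole argument closes in a single forward sweep.
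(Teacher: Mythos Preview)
Your proof is correct and follows essentially the same route as the paper: assume no monochromatic $X$, then iteratively extend a rainbow partial simplex by observing that the locus of admissible next vertices is exactly an $(n-1-j)$-sphere of radius $h_j$, to which the hypothesis $\mathbb{S}^{n-1-j}(h_j)\overset{j+1}{\rightarrow}X$ applies. The only cosmetic difference is that you inline the chaining argument for the initial bichromatic edge, whereas the paper invokes Proposition~\ref{prop:TwoPoint} directly.
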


The following diagonal result, which generalizes Corollary~\ref{cor:sphere}, can be deduced from theorem~\ref{thm:RodlFrankl} and theorem~\ref{thm:highsimplex}. For instance, it holds for any regular simplex, as the circumradius of any regular simplex is less than its height.
\begin{cor}\label{cor:simplex}
    For any $(k+1)$-dimensional simplex $\Delta$ with a sequence of heights $h_{1},h_{2},\ldots,h_{k}$, if $\min\limits_{i}h_{i}>\rho(\Delta)$, then there exists some $n_{0}=n_{0}(\Delta)$ such that for any $n\ge n_{0}$ and any positive integer $r$,
    \begin{equation*}
    \mathbb{E}^n\overset{r}{\rightarrow} (\Delta;\Delta)_{\textup{GR}}.
\end{equation*}
\end{cor}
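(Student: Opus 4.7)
The plan is to deduce the corollary directly from Theorem~\ref{thm:highsimplex} applied with $X := \Delta$. It suffices to verify the hypothesis of that theorem, namely that $\mathbb{S}^{n-1-j}(h_{j})\overset{j+1}{\rightarrow}\Delta$ holds for each $1\leq j\leq k$, provided $n$ is chosen large enough in terms of $\Delta$ alone.

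First, since $\Delta$ is a simplex and $\min_{i} h_{i} > \rho(\Delta)$, for each $j\in\{1,\ldots,k\}$ the quantity $\varepsilon_{j} := h_{j} - \rho(\Delta)$ is strictly positive. Applying Theorem~\ref{thm:RodlFrankl}(2) with this $\varepsilon_{j}$ and with $j+1$ colors produces an integer $n_{j} := n(\Delta, j+1, \varepsilon_{j})$, depending only on $\Delta$, such that $\mathbb{S}^{n_{j}}(h_{j})\overset{j+1}{\rightarrow}\Delta$. I would then set $n_{0} := \max_{1\leq j\leq k}(n_{j} + j + 1)$; crucially, this is an absolute constant depending only on $\Delta$, and in particular is independent of the number of colors $r$.

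Next, I would invoke a straightforward monotonicity observation: for any $m'\geq m$ and any radius $h$, the sphere $\mathbb{S}^{m'}(h)$ contains an isometric copy of $\mathbb{S}^{m}(h)$ as a cross-section by a suitable affine subspace through its center, so any $s$-coloring of $\mathbb{S}^{m'}(h)$ restricts to an $s$-coloring of $\mathbb{S}^{m}(h)$ and preserves monochromatic configurations. Consequently, for $n\geq n_{0}$ we have $n-1-j \geq n_{j}$ for every $j\in\{1,\ldots,k\}$, whence $\mathbb{S}^{n-1-j}(h_{j})\overset{j+1}{\rightarrow}\Delta$, as required to feed into Theorem~\ref{thm:highsimplex}.

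Applying Theorem~\ref{thm:highsimplex} with $X=\Delta$ then yields $\mathbb{E}^{n}\overset{r}{\rightarrow}(\Delta;\Delta)_{\textup{GR}}$ for every positive integer $r$. This argument is essentially assembly-style: the two main inputs (the super-Ramsey theorem for simplices and Theorem~\ref{thm:highsimplex}) already do the heavy lifting, and the only mildly delicate step is the dimension bookkeeping that keeps the threshold $n_{0}$ uniform in $r$---which is precisely the feature that makes the dimension independent of the number of colors.
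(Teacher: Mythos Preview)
Your proposal is correct and matches the paper's approach exactly: the paper states that the corollary ``can be deduced from Theorem~\ref{thm:RodlFrankl} and Theorem~\ref{thm:highsimplex}'' without spelling out the details, and your write-up supplies precisely those details---applying Theorem~\ref{thm:RodlFrankl}(2) with $\varepsilon_{j}=h_{j}-\rho(\Delta)$ to verify the hypothesis of Theorem~\ref{thm:highsimplex} with $X=\Delta$, together with the obvious dimension monotonicity to make a single threshold $n_{0}$ work.
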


Note that Theorem~\ref{thm:MaoWang}~(2) shows that when $Q$ is a rectangle, then $\mathbb{E}^{13r+4}\overset{r}{\rightarrow}(Q;Q)_{\textup{GR}}$. It is natural to ask whether the results hold when the dimension is independent of the number of colors $r$. Our next result indicates that, when $Q$ is a square, $\mathbb{E}^{n}\overset{r}{\rightarrow}(Q;Q)_{\textup{GR}}$, where $n$ is an absolute constant.

\begin{theorem}\label{thm:UnitSquare}

Let $\square$ be a square. There is a natural number $n_{0}$ such that for any integer $n\geq n_{0}$, and any positive integer $r$, we have 
    \begin{equation*}
   \mathbb{E}^n\overset{r}{\rightarrow} (\square ;\square )_{\textup{GR}}. 
\end{equation*}
\end{theorem}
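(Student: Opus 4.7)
The strategy is to adapt the rotation method underlying the proof of Theorem~\ref{thm:off-triangle} to the $4$-vertex configuration of the square, combined with the $2$-color sphere Ramsey theorem for rectangles (Theorem~\ref{thm:RodlFrankl}(1)). Specialized to the fixed value $r=2$, the latter yields a dimension $N^{\ast}=N^{\ast}(\square,\varepsilon)$ that does \emph{not} depend on the eventual number of colors in the Gallai problem, and this independence is what ultimately makes $n_0$ an absolute constant.

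First I would fix $\varepsilon>0$ small and set $N^{\ast}$ via Theorem~\ref{thm:RodlFrankl}(1) with $r=2$ so that $\mathbb{S}^{N^{\ast}}(\tfrac{\sqrt{2}}{2}+\varepsilon)\overset{2}{\rightarrow}\square$, and take $n_0 = N^{\ast}+2$. Given any $r$-coloring $c$ of $\mathbb{E}^{n_0}$ with no rainbow $\square$, using connectedness of the distance-$\sqrt{2}$ graph on $\mathbb{E}^{n_0}$ I would locate $A,C$ at distance $\sqrt{2}$ with $c(A)\neq c(C)$; if no such pair exists then $c$ is constant and every square is monochromatic. Every unit square with $AC$ as a diagonal is determined by an antipodal pair $\{B,B'\}$ on the sphere $S\cong\mathbb{S}^{n_0-2}(\tfrac{\sqrt{2}}{2})$ lying in the hyperplane through the midpoint $M$ of $AC$ perpendicular to $AC$, with $B'=2M-B$. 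The no-rainbow hypothesis applied to each $ABCB'$ gives $|\{c(A),c(B),c(C),c(B')\}|\le 3$, which together with $c(A)\neq c(C)$ forces the rigid constraint that whenever $c(B),c(B')\notin\{c(A),c(C)\}$ one has $c(B)=c(B')$. Defining the $2$-coloring $\chi(B)=0$ if $c(B)=c(A)$ and $\chi(B)=1$ otherwise on $S$, the choice of $N^{\ast}$ produces a $\chi$-monochromatic unit square $\square^{\ast}$ on $S$; if $\chi\equiv 0$ then all four vertices share color $c(A)$ and we are done.

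The main obstacle is the $\chi\equiv 1$ case. Here no vertex of $\square^{\ast}$ carries color $c(A)$, and because any unit square on $S$ has its two diagonals equal to perpendicular diameters of $S$, its four vertices split into two antipodal pairs. The constraint above then pins each such pair either to meet $\{c(C)\}$ or to be monochromatic in a single bad color outside $\{c(A),c(C)\}$. A finite case analysis, combined either with a second $2$-color Frankl--R\"odl step on an orthogonal sub-sphere (which costs only an $r$-independent additive increase in $n_0$) or with the antipodal-constancy symmetry forcing a monochromatic or rainbow structure within the bad color classes, converts every sub-case into either a monochromatic or rainbow unit square in $\mathbb{E}^{n_0}$, contradicting the standing assumptions. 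Preserving the $r$-independence through this second stage is the essential delicate point of the argument.
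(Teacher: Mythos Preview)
Your proposal has a genuine gap at the very step where you invoke Theorem~\ref{thm:RodlFrankl}. The sphere $S$ obtained by rotating $B$ about the diagonal $AC$ has radius exactly $\frac{\sqrt{2}}{2}$, which equals the circumradius $\rho(\square)$ of the unit square. Theorem~\ref{thm:RodlFrankl}(1) only guarantees $\mathbb{S}^{N}(\rho(\square)+\varepsilon)\overset{2}{\rightarrow}\square$ for \emph{strictly larger} radii, and you even set up $N^{\ast}$ for radius $\tfrac{\sqrt{2}}{2}+\varepsilon$; but the geometric sphere you then work on has radius $\tfrac{\sqrt{2}}{2}$, and the two never match. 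This is not a technicality: in fact $\mathbb{S}^{N}(\tfrac{\sqrt{2}}{2})\overset{2}{\nrightarrow}\square$ for every $N$. Any unit square lying on a sphere of radius $\tfrac{\sqrt{2}}{2}$ must have its center at the center of the sphere, so its two diagonals are diameters and its vertex set consists of two antipodal pairs. Hence the $2$-coloring of $S$ that gives antipodal points opposite colors (e.g.\ color by the sign of the first nonzero coordinate) has no monochromatic unit square. Your black-box appeal to Frankl--R\"odl therefore fails, and nothing in the setup forces $\chi$ to avoid this antipodal behavior.

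This is precisely why the paper does \emph{not} pivot on the diagonal. It fixes instead a \emph{side} of the square, i.e.\ two points at distance $1$, so that the remaining two vertices are parametrized by a point $\boldsymbol{x}$ on a sphere of radius $1>\rho(\square)$, where Frankl--R\"odl does apply. The price is that a single parameter $\boldsymbol{x}$ now controls two new vertices $(0,\boldsymbol{x})$ and $(1,\boldsymbol{x})$ simultaneously, so the ``no rainbow square'' constraint does not reduce to a clean $2$-coloring but to a $5$-type classification; the paper handles this by applying Theorem~\ref{thm:RodlFrankl} with $r=5$ to the rectangular configuration $Q_{5}=\frac{1}{\sqrt{2}}\{0,1\}^{5}$ (which still has circumradius $<1$) and then running a finite combinatorial analysis on $Q_{5}^{(3)}$ for the surviving type. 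Your second-stage sketch for the ``$\chi\equiv 1$'' case is also too vague to evaluate, but the radius issue above already blocks the argument before that point.
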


Write $\ell_{m}$ to denote the set consisting of $m$ points on a line with consecutive points at a distance of one. In the classical Euclidean Ramsey problems, $\ell_{m}$ is one of the most considerable configurations, for example, Erd\H{o}s, Graham, Montgomery, Rothschild, Spencer and Straus~\cite{1973JCTA} provided an explicit $2$-coloring of $\mathbb{E}^{n}$ without monochromatic $\ell_{6}$, and very recently, Conlon and Wu~\cite{conlon2022more} took advantage of a random $2$-coloring of $\mathbb{E}^{n}$ and proved that with positive probability, there is neither a monochromatic $\ell_{3}$ nor a monochromatic $\ell_{10^{50}}$. We say an $r$-coloring of $\mathbb{E}^{n}$ is \emph{spherical coloring} if all points at the same distance from the origin receive the same color. We can see in both of the above celebrated results on Euclidean Ramsey problems for lines, the coloring is spherical. Motivated by this, we prove the following Gallai-Ramsey result for $\ell_{3}$ restricted on spherical coloring. We also show that the similar result under the spherical coloring, does not hold for $\ell_{m}$ with any $m\ge 4$.

\begin{theorem}\label{thm:ell3}
For any positive integer $r$, any positive integer $n\geq 2$, any spherical $r$-coloring of $\mathbb{E}^n$, there is a monochromatic copy of $\ell_{3}$ or a rainbow copy of $\ell_{3}$.
\end{theorem}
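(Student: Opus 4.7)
The plan is to translate the spherical assumption into a one-dimensional coloring $f:[0,\infty)\to[r]$, where $f(\rho)$ denotes the common color of all points at distance $\rho$ from the origin, and derive a contradiction from assuming no monochromatic or rainbow $\ell_3$ exists. The key algebraic ingredient is the identity $|A|^2+|C|^2=2|B|^2+2$ for any $\ell_3$ triple $(A,B,C)$ with $B$ the midpoint, together with the converse that, for $n\ge 2$, every triple of non-negative reals $(d_A,d_B,d_C)$ satisfying this identity and the realizability inequalities $|d_B-1|\le d_A,d_C\le d_B+1$ is realized by some $\ell_3$ in $\mathbb{E}^n$.

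Specializing to $\ell_3$ triples with two coinciding radii yields two pointwise constraints. First, $d_A=d_C=\sqrt{\rho^2+1}$ with $d_B=\rho$ makes the triple isoceles; since two of the three radii already match, the configuration is automatically non-rainbow and non-monochromaticity forces $f(\rho)\ne f(\sqrt{\rho^2+1})$ for every $\rho\ge 0$. Second, $d_A=d_B=\rho\ge 1/2$ and $d_C=\sqrt{\rho^2+2}$ analogously forces $f(\rho)\ne f(\sqrt{\rho^2+2})$. Applying the first constraint again at $\sqrt{\rho^2+1}$, the three radii $\rho,\sqrt{\rho^2+1},\sqrt{\rho^2+2}$ must receive three pairwise distinct colors for every $\rho\ge 1/2$. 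This already resolves the case $r\le 2$.

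For $r\ge 3$ I would pass to the squared-radius function $F(s):=f(\sqrt{s})$, under which the constraints read $F(s)\ne F(s+1)$ (for $s\ge 0$) and $F(s)\ne F(s+2)$ (for $s\ge 1/4$). A short induction then shows that inside any window where only three colors appear, $F$ obeys a rigid shift-by-one law of the form $F(s)\equiv \alpha(\{s\})+\lfloor s\rfloor \pmod 3$ for some auxiliary $\alpha:[0,1)\to\{0,1,2\}$. Writing an $\ell_3$ triple $(s_A,s_B,s_C)$ with $s_A+s_C=2s_B+2$ as $s_A=X+a$, $s_B=Y+b$, $s_C=Z+c$, the integer shift $X+Z-2Y=2-m$ is determined by $m:=a+c-2b\in\{-1,0,1\}$, and the not-mono-not-rainbow requirement becomes the modular condition $\alpha(a)+\alpha(b)+\alpha(c)\not\equiv m-2\pmod 3$. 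A short case analysis, for instance specialising to $(a,b,c)=(0,1/4,1/2)$ with $m=0$ (together with large $X,Y,Z$ to ensure realizability), exhibits fractional parts making the $\alpha$-sum take exactly the forbidden residue, producing a realizable monochromatic or rainbow $\ell_3$.

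The main obstacle will be ensuring the shift-by-one rigidity when a priori $F$ might use a fourth color in some window, since the clean modular argument relies on only three colors appearing locally. I would dispose of this by observing that any newly introduced color $d$ must itself satisfy $d\ne F(s+1),F(s+2)$ by the two basic constraints, so the realizable $\ell_3$ triple anchored at the transition point, together with two previously seen colors, would already be rainbow — forcing the three-color rigidity globally and completing the contradiction.
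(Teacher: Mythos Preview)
Your reduction to the one-variable coloring $F(s)=f(\sqrt{s})$, the identity $s_A+s_C=2s_B+2$, and the fact that $F(s),F(s+1),F(s+2)$ are pairwise distinct for $s\ge 1/4$ are exactly the paper's opening moves. The subsequent steps, however, have two genuine gaps.

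\textbf{The fourth-color step is not justified.} You claim that if a new color $d$ appears at some $s$, then ``the realizable $\ell_3$ triple anchored at the transition point, together with two previously seen colors, would already be rainbow.'' But which triple? The constraints $F(s)\ne F(s+1)$ and $F(s)\ne F(s+2)$ you invoke come from triples with a repeated entry (namely $(s+1,s,s+1)$ and $(s,s,s+2)$), which are never rainbow. Any three-valued potential triple containing $s$, say $(s-1,s,s+3)$, requires you to also know $F(s-1)\ne F(s+3)$, and nothing so far gives that. The paper handles this with a genuinely different device: the \emph{pair} of overlapping triples $(N+1,\,N+\tfrac12,\,N+2)$ and $(N,\,N+\tfrac12,\,N+3)$ sharing the middle value $N+\tfrac12$; whatever color $F(N+\tfrac12)$ is, one of the two becomes rainbow once $F(N+3)$ is a fourth color. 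A single triple cannot substitute for this.

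\textbf{The rigidity formula and the finishing step are incomplete.} From ``$F(s),F(s+1),F(s+2)$ is a bijection to the three colors'' you only get $F(s)=\sigma_{\{s\}}\bigl(\lfloor s\rfloor\bmod 3\bigr)$ for some $\sigma:[0,1)\to S_3$; your formula $F(s)\equiv\alpha(\{s\})+\lfloor s\rfloor$ captures only the three rotations in $S_3$, not the three reflections $k\mapsto \alpha-k$. Even granting the rotation case, a \emph{single} specialisation like $(a,b,c)=(0,\tfrac14,\tfrac12)$ merely imposes one congruence $\alpha(0)+\alpha(\tfrac14)+\alpha(\tfrac12)\not\equiv 1$, which $\alpha\equiv 0$ already satisfies; you must exhibit a \emph{system} of triples whose constraints are jointly unsatisfiable. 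The paper does this concretely on the grid $N+\{0,\tfrac13,\tfrac23\}+\mathbb{Z}$: three potential triples with fractional parts a permutation of $(0,\tfrac13,\tfrac23)$ but with different integer offsets force $\alpha(0)+\alpha(\tfrac13)+\alpha(\tfrac23)$ to avoid all three residues mod $3$, a contradiction (in the paper's language, they force $F(N)=F(N+\tfrac13)=F(N+\tfrac23)$ and then $(N,\,N+\tfrac23,\,N+3+\tfrac13)$ is monochromatic). Your sketch needs to supply this system, and to either rule out or separately handle the reflection orientations.
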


Recall that the \emph{diameter} of a finite configuration $X\subseteq\mathbb{E}^{n}$ is defined as the largest distance between any two points in $X$, that is, $\sup\{|\boldsymbol{ab}|:\boldsymbol{a},\boldsymbol{b}\in X\}$. Additionally, the \emph{box-width} of $X\subseteq\mathbb{E}^{n}$ refers to the smallest non-negative real number $a$ such that $X$ can be embedded into $[0,a]\times \mathbb{E}^{n-1}$. The \emph{affine dimension} of an affine space is defined as the dimension of the vector space of its translations. Building upon these definitions, we can derive the following general non-Gallai-Ramsey results by utilizing explicit colorings.

\begin{theorem}\label{thm:coloringGeneral}

Let $X\subseteq\mathbb{E}^{n}$ be a finite configuration with diameter $b$. 

\begin{itemize}
    \item[\textup{(1)}] If the box-width of $X$ is $a>0$ and $P$ is a configuration with $t_{1}$ point diameter at most $a(t_{1}-2)$, then  
\begin{equation*}
    \mathbb{E}^n \overset{\left \lceil b/a \right \rceil+1}{\nrightarrow} (X;P)_{\textup{GR}}.
\end{equation*}

   \item[\textup{(2)}] If $X$ has affine dimension $m<n$, let $g_{X}$ be the minimal number among the diameters of all possible projections of $X$ onto some $(n-m+1)$-dimensional subspaces. Set $h:=\frac{g_{X}}{\sqrt{n-m+1}}$, If $P$ has more than $(t_{2}+1)^{n-m+1}$ points and the diameter of $P$ is at most $ht_{2}$, then 
   \begin{equation*}
    \mathbb{E}^{n} \overset{(\lceil b/h\rceil+1)^{n-m+1}}{\nrightarrow} (X;P)_{\textup{GR}}.
\end{equation*}
\end{itemize}

\end{theorem}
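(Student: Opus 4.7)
The plan is to construct, in each part, an explicit $r$-coloring of $\mathbb{E}^{n}$ built from partitioning space into parallel strips (part~(1)) or an axis-aligned box grid inside a single subspace (part~(2)) and then cycling through the color labels. The box-width, resp.\ projection, parameter controls the monochromatic side by forcing every copy of $X$ to span too many strips, while the diameter bound on $P$ controls the rainbow side by forcing every copy of $P$ to occupy too few strips.

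For part~(1), I set $r=\lceil b/a\rceil +1$ and define the coloring $\boldsymbol{x}=(x_{1},\dots ,x_{n})\mapsto \lfloor x_{1}/a\rfloor \bmod r$, which cuts $\mathbb{E}^{n}$ into cyclically colored half-open strips $[ja,(j+1)a)\times \mathbb{E}^{n-1}$. By definition of box-width, any rigid-motion copy of $X$ has $\boldsymbol{e}_{1}$-extent at least $a$, hence cannot fit inside a single half-open strip; and if all its vertices lay in strips of one color class, the occupied strip indices would differ by at least $r$, pushing the $\boldsymbol{e}_{1}$-extent strictly above $(r-1)a=\lceil b/a\rceil \cdot a\ge b$, which contradicts $\mathrm{diam}(X)=b$. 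For a rainbow copy of $P$ with $t_{1}$ points, the $t_{1}$ vertices must sit in $t_{1}$ strips of distinct colors, and comparing the extreme strip indices together with the half-open boundary forces the $\boldsymbol{e}_{1}$-extent strictly above $(t_{1}-2)a$, contradicting $\mathrm{diam}(P)\le a(t_{1}-2)$.

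For part~(2), fix any $(n-m+1)$-dimensional linear subspace $V\subseteq \mathbb{E}^{n}$ with orthonormal coordinates $y_{1},\dots ,y_{n-m+1}$, set $k=\lceil b/h\rceil +1$, and color $\boldsymbol{x}$ by the tuple $\bigl(\lfloor y_{1}(\boldsymbol{x})/h\rfloor \bmod k,\dots ,\lfloor y_{n-m+1}(\boldsymbol{x})/h\rfloor \bmod k\bigr)$, which uses $r=k^{n-m+1}$ colors. A short rotational-invariance observation (the projection of $gX$ onto $V$ has the same diameter as the projection of $X$ onto $g^{-1}V$) shows that the orthogonal projection of every congruent copy of $X$ onto $V$ has $\ell_{2}$-diameter at least $g_{X}$, and then $\|\cdot\|_{2}\le \sqrt{n-m+1}\cdot \|\cdot\|_{\infty}$ yields some coordinate $y_{i}$ on which this projection has extent at least $g_{X}/\sqrt{n-m+1}=h$; running the one-dimensional argument of part~(1) in coordinate $y_{i}$ with strip width $h$ and period $k$ rules out monochromatic $X$. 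For rainbow $P$: since $\mathrm{diam}(P)\le ht_{2}$, the projection of $P$ onto $V$ meets at most $t_{2}+1$ strips of width $h$ in each of the $n-m+1$ coordinate directions, so it lies inside at most $(t_{2}+1)^{n-m+1}$ axis-aligned cubes, each monochromatic; as $|P|>(t_{2}+1)^{n-m+1}$, pigeonhole yields two points of $P$ sharing a color.

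The main bookkeeping obstacle is ensuring that each ``extent strictly exceeds the bound'' inequality really is strict, especially in boundary cases like $b/a\in \mathbb{Z}$ or $b/h\in \mathbb{Z}$. This strictness is forced by using half-open strips: two occupied strips of the same color class differ by at least $r$ (resp.\ $k$) indices, and combining this with the fact that the leftmost point lies strictly inside its strip gives the strict jump beyond $(r-1)a$ (resp.\ $(k-1)h$) needed to close the contradiction.
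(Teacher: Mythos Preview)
Your proof is correct and follows essentially the same approach as the paper: the same cyclic strip coloring for part~(1) and the same cyclic box-grid coloring on an $(n-m+1)$-dimensional subspace for part~(2), with the same diameter/pigeonhole arguments for both the monochromatic and rainbow sides. If anything, you are slightly more explicit than the paper in two places: you spell out the rotational invariance showing that the projection of \emph{every} congruent copy of $X$ onto the fixed subspace $V$ has diameter at least $g_{X}$ (the paper uses this implicitly), and you are careful about the strict inequalities arising from the half-open strips, which the paper treats more loosely.
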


Regarding the result in Theorem~\ref{thm:MaoWang}~(3), the rectangle with side lengths $a$ and $b$, $a\le b\le\sqrt{3}a$ has diameter at most $2a$ and box-width $a$, thus Theorem~\ref{thm:coloringGeneral}~(1) can easily recover this result, and offer much greater flexibility. 

The rest of this paper is organized as follows. We introduce the rotation method and prove Theorems~\ref{thm:rightTriangle},~\ref{thm:off-triangle} and~\ref{thm:highsimplex} in Section~\ref{sec:Rotation}. The proofs of Theorems~\ref{thm:UnitSquare} and~\ref{thm:ell3} leverage some combinatorial ideas, we present them in Section~\ref{sec:MoreCombinatorial}. We give the explicit colorings on points in $\mathbb{E}^{n}$ to demonstrate some non-Gallai-Ramsey results in Section~\ref{sec:nonGallaiRamsey}. Finally we conclude and discuss some further problems in Section~\ref{sec:Remarks}.

\section{The rotation method}\label{sec:Rotation}
In our proofs, we will primarily utilize the rotation method, which revolves around a simple yet powerful idea. In general, to establish $\mathbb{E}^n\overset{r}{\rightarrow} (K_{1};K_{2})_{\textup{GR}}$, we begin by assuming the absence of a rainbow configuration $K_{2}$, based on certain structural properties of $K_{2}$, our objective is then to identify a lower-dimensional space $\mathbb{S}$ where the points in $\mathbb{S}$ can be assigned fewer colors. By leveraging known results pertaining to $\mathbb{S}$, we can enforce the existence of a monochromatic copy of $K_{1}$.

Before delving into the formal proofs, let us introduce two useful facts that will be frequently employed throughout our arguments. Recall that for $X\subseteq\mathbb{E}^{n}$, the notation $\mathbb{E}^n\overset{r}{\rightarrow} X$ means that for any $r$-coloring of $\mathbb{E}^{n}$, there exists a monochromatic configuration congruent to $X$. Regarding right triangles, Shader~\cite{SHADER1976385} established the following result.

\begin{lemma}[\cite{SHADER1976385}]\label{lem:MonoRightTriangle}

Let $T$ be a right triangle, then $\mathbb{E}^2\overset{2}{\rightarrow} T $.

\end{lemma}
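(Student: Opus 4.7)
The plan is to argue by contradiction: assume there is a $2$-coloring $\chi$ of $\mathbb{E}^{2}$ with no monochromatic copy of the right triangle $T$ with legs $a\le b$ and hypotenuse $c=\sqrt{a^{2}+b^{2}}$. A cheap pigeonhole on the three vertices of an equilateral triangle of side $c$ produces a monochromatic pair at distance $c$, say $\boldsymbol{p},\boldsymbol{q}$ both red.

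The core geometric input is Thales' theorem: the right-angle vertex of any right triangle with hypotenuse $\boldsymbol{pq}$ lies on the circle $\Gamma$ of diameter $\boldsymbol{pq}$. On $\Gamma$ there are exactly four points whose unordered distances to $\{\boldsymbol{p},\boldsymbol{q}\}$ are $\{a,b\}$: two reflections $\boldsymbol{c}_{1},\boldsymbol{c}_{2}$ with $|\boldsymbol{p}\boldsymbol{c}_{i}|=a$ and $|\boldsymbol{q}\boldsymbol{c}_{i}|=b$, and two reflections $\boldsymbol{d}_{1},\boldsymbol{d}_{2}$ with the roles of $a,b$ swapped. Each of these, if red, would complete a monochromatic copy of $T$ together with $\boldsymbol{p},\boldsymbol{q}$, so all four must be blue. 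Placing $\boldsymbol{p}=(0,0)$ and $\boldsymbol{q}=(c,0)$, the four forced blue points sit at $(a^{2}/c,\pm ab/c)$ and $(b^{2}/c,\pm ab/c)$; they are the vertices of a rectangle of side lengths $(b^{2}-a^{2})/c$ and $2ab/c$ whose diagonals have length exactly $c$. In particular, the four forced blue points contain two pairs at distance $c$.

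Reapplying the Thales argument to one of these new blue $c$-pairs forces a fresh quadruple of red points of the same rectangular shape, and the propagation continues by alternating colors. My plan for actually closing off the contradiction is to run this cascade starting from a well-chosen finite seed rather than a single monochromatic pair (a natural candidate is the vertex set of a rhombus formed by two equilateral triangles of side $c$, or a small hexagonal patch with many $c$-pairs) and to track the many forced blue/red rectangles simultaneously until two of them share a vertex whose color is overdetermined, or until three same-color points land in the position of a copy of $T$. The main obstacle I expect is that the side lengths $(b^{2}-a^{2})/c$ and $2ab/c$ of the alternating rectangles are generically unrelated to $a,b,c$, so the cascade has no obvious self-terminating recurrence, and a careful combinatorial analysis of a bounded configuration will be needed to close the loop. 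The degenerate isoceles case $a=b$, in which the blue quadruple collapses to a single antipodal pair, will likely require a separate but easier argument on a small square grid, again exploiting Thales on the diagonal of length $c=a\sqrt{2}$.
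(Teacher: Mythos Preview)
The paper does not give its own proof of this lemma: it simply quotes Shader's result \cite{SHADER1976385} and uses it as a black box. So there is no ``paper's proof'' to compare against; you are being asked to reproduce Shader's theorem from scratch.

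Your proposal is not a proof but an outline that you yourself flag as incomplete. The Thales observation is correct and pleasant: a red pair $\boldsymbol{p},\boldsymbol{q}$ at distance $c$ forces the four right-angle vertices on the Thales circle to be blue, and these sit at the corners of a rectangle whose diagonals have length $c$. But from there the argument is only a hope. You say that you will ``run this cascade'' from a well-chosen seed and ``track the many forced blue/red rectangles simultaneously until two of them share a vertex whose color is overdetermined,'' and you correctly identify the obstacle: the new rectangle has side lengths $(b^{2}-a^{2})/c$ and $2ab/c$, which are generically incommensurable with $a,b,c$, so successive rectangles do not close up on themselves, and no finite contradiction is exhibited. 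Writing down a specific finite configuration (the rhombus or hexagonal patch you mention) and actually verifying that the forced colors collide is the entire content of the proof, and none of that work is present here. The isoceles case $a=b$ is likewise deferred.

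In short, the genuine gap is that the propagation step has no termination argument. Either supply a concrete finite point set together with a complete case analysis showing every $2$-coloring of it contains a monochromatic $T$, or invoke Shader's paper as the present authors do.
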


\begin{prop}\label{prop:TwoPoint}
Let $n\geqslant 2$ and $r$ be positive integers. If $P\subseteq\mathbb{E}^{n}$ is a two-point set and $X$ be an arbitrary configuration in $\mathbb{E}^n$, then we have 
\begin{equation*}
   \mathbb{E}^n\overset{r}{\rightarrow} (X;P)_{\textup{GR}}. 
\end{equation*}
 
\end{prop}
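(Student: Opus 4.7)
The plan is to observe that forbidding a rainbow copy of a two-point set $P$ forces a very rigid coloring: if we let $d$ be the common distance between the two points of $P$, then the absence of a rainbow $P$ means that any two points in $\mathbb{E}^n$ at Euclidean distance exactly $d$ must receive the same color. So the coloring is constant on every connected component of the ``distance-$d$ graph'' whose vertex set is $\mathbb{E}^n$ and whose edges are the pairs at distance $d$.

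Next, I would argue that this distance-$d$ graph on $\mathbb{E}^n$ is connected for every $n \geq 2$. It suffices to prove connectivity in the plane, since $\mathbb{E}^n$ contains a $2$-flat through any two given points. Given points $\boldsymbol{a}$ and $\boldsymbol{b}$ in $\mathbb{E}^2$ with $|\boldsymbol{ab}|=\delta$, pick an integer $k$ with $\delta \leq 2kd$, and build a path $\boldsymbol{a}=\boldsymbol{p}_0,\boldsymbol{p}_1,\ldots,\boldsymbol{p}_{2k}=\boldsymbol{b}$ of step length $d$ by taking $k$ equally spaced points along the segment $\boldsymbol{ab}$ and inserting, between consecutive ones, an apex of an isoceles ``zig'' with two sides of length $d$; since $n\geq 2$, such apexes exist. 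Thus every two points of $\mathbb{E}^n$ are joined by a finite path in the distance-$d$ graph, so the graph is connected.

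Combining the two observations, the entire space $\mathbb{E}^n$ receives a single color under any $r$-coloring that avoids a rainbow copy of $P$. In this monochromatic $\mathbb{E}^n$, every finite configuration $X \subseteq \mathbb{E}^n$ trivially appears as a monochromatic congruent copy of itself. Hence $\mathbb{E}^n \overset{r}{\rightarrow} (X; P)_{\textup{GR}}$.

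The main (very mild) obstacle is simply making the connectivity step clean; this is the standard fact that the unit-distance graph of $\mathbb{E}^2$ is connected, rescaled by $d$. Everything else is immediate from unpacking the definition of a rainbow two-point configuration.
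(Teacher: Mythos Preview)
Your proposal is correct and follows essentially the same approach as the paper: both argue that forbidding a rainbow copy of $P$ forces any two points at distance $d=|\boldsymbol{ab}|$ to share a color, and then establish connectivity of the distance-$d$ graph via an isoceles-triangle (``zig'' / perpendicular-bisector) construction to conclude that the entire space is monochromatic. The paper phrases the connectivity step as ``all points within $2|\boldsymbol{ab}|$ of a red point are red, then iterate,'' which is exactly your zigzag path argument; aside from a harmless off-by-one in your description of how many equally spaced points lie on $\boldsymbol{ab}$, the two proofs coincide.
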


\begin{proof}
Consider an isometric copy of $P$ denoted by $\boldsymbol{ab}$, where $|\boldsymbol{ab}|$ represents the distance between $\boldsymbol{a}$ and $\boldsymbol{b}$. Let $\boldsymbol{x}_{0}$ be an arbitrary point in $\mathbb{E}^{n}$, assumed to be colored red. Now, for any point $\boldsymbol{x}\in\mathbb{E}^{n}$ with $|\boldsymbol{x}\boldsymbol{x}_{0}|\le 2|\boldsymbol{ab}|$, consider the perpendicular bisector of the line segment $\boldsymbol{x}\boldsymbol{x}_{0}$. On this bisector, choose a point $\boldsymbol{y}$ such that $|\boldsymbol{xy}|=|\boldsymbol{x}_{0}\boldsymbol{y}|=|\boldsymbol{ab}|$. It follows that both $\boldsymbol{y}$ and $\boldsymbol{x}$ must be colored red, which implies that all points within a distance of $2|\boldsymbol{ab}|$ from point $\boldsymbol{x}$ are colored red. Continue this process and finally we conclude that all points in $\mathbb{E}^{n}$ should be colored red, which induces a monochromatic copy of $X$.
\end{proof}

We will first prove the results on right triangles, and then prove more general results in Section~\ref{subsec:general}. 

\subsection{Right triangles: Proof of Theorem~\ref{thm:rightTriangle}}

It is evident that if $\mathbb{E}^{n_{0}}\overset{r}{\rightarrow} (T;T)_{\textup{GR}}$ holds, then $\mathbb{E}^{n}\overset{r}{\rightarrow} (T;T)_{\textup{GR}}$ holds for any $n\ge n_{0}$. Therefore, it suffices to focus on the case when $n=3$. Furthermore, when $r=1$, the problem is trivial, and when $r=2$, we can readily conclude the result by Lemma~\ref{lem:MonoRightTriangle}, thus we can assume that $r\ge 3$. Consider a right triangle $T$ with the lengths of its two legs being $a$ and $b$. Given any $r$-coloring of $\mathbb{E}^{3}$, assume that there is no rainbow configuration congruent to $T$, we Utilize a standard Cartesian coordinate system, and represent each point in $\mathbb{E}^{3}$ using a vector $(x,y,z)$. Since there is no rainbow copy of $T$, our strategy is to identify a two-dimensional space that is colored using only two colors, then we can find a monochromatic right triangle using Lemma~\ref{lem:MonoRightTriangle}.

According to Proposition~\ref{prop:TwoPoint}, and without loss of generality, we can assume that $\boldsymbol{0}:=(0,0,0)$ is colored red, and $\boldsymbol{a}_{1}:=(a,0,0)$ is colored blue. Consider the point $\boldsymbol{b}_{1}:=(0,b,0)$, since $\boldsymbol{0}$, $\boldsymbol{a}_{1}$ and $\boldsymbol{b}_{1}$ form a copy of $T$, $\boldsymbol{b}_{1}$ must be colored either red or blue, otherwise, there exists a rainbow copy of $T$. Moreover, if $\boldsymbol{b}_{1}$ is colored blue, then $\boldsymbol{c}_{1}:=(a,b,0)$ must be colored red. On the other hand, if $\boldsymbol{b}_{1}$ is colored red, then $\boldsymbol{c}_{1}:=(a,b,0)$ must be colored blue. For any real numbers $s$ and $t$ in $\mathbb{R}$, we define $\mathbb{E}^{x=s}:=\{\boldsymbol{v}=(x,y,z)\in\mathbb{E}^{3}:x=s\}$, which represents the subset of $\mathbb{E}^{3}$ consisting of points with $x$-coordinate equal to $s$. Furthermore, we introduce $\mathbb{S}_{\boldsymbol{v},t}^{x=s}$ as the sphere centered at $\boldsymbol{v}$ with radius $t$ within the $2$-dimensional subspace $\mathbb{E}^{x=s}$.

\begin{figure}[htbp]\label{fig:RightT11}
\begin{center}

\begin{tikzpicture}[scale=1.6]

    \fill[red] (-0.8,0) circle (2pt) node[below] {(0,0,0)};
    \fill[blue] (0.8,0) circle (2pt) node[below] {(a,0,0)} ;
    
    \draw (-0.8,0) -- (0.8,0);
    
    \draw[black][dashed] (-0.8,0) ellipse (3/7 and 1);
    \draw[black][dashed] (0.8,0) ellipse (3/7 and 1);
    \node at (-0.8,-1.2) {$\mathbb{S}_{\boldsymbol{0},b}^{x=0}$};
     \node at (0.8,-1.2) {$\mathbb{S}_{\boldsymbol{a}_{1},b}^{x=a}$};
     \fill[black] (-0.8,1) circle (1pt) node[above] {red or blue};
     \draw (-0.8,0) -- (-0.8,1);
     \draw (0.8,0) -- (-0.8,1);

\fill[red] (3.2,0) circle (2pt) node[below] {(0,0,0)};
    \fill[blue] (4.8,0) circle (2pt) node[below] {(a,0,0)} ;

   \draw[black][ dashed] (3.2,0) ellipse (3/7 and 1);
    \draw[black][dashed] (4.8,0) ellipse (3/7 and 1);

\draw (3.2,0) -- (4.8,0);
\draw (3.2,0) -- (2.9,0.75);
     \draw (4.8,0) -- (2.9,0.75);
    
    \node at (3.2,-1.2) {$\mathbb{S}_{\boldsymbol{0},b}^{x=0}$};
     \node at (4.8,-1.2) {$\mathbb{S}_{\boldsymbol{a}_{1},b}^{x=a}$};

    \node at (2.9,2) {$\mathbb{S}_{\boldsymbol{u},b}^{x=0}$};
     \node at (4.5,2) {$\mathbb{S}_{\boldsymbol{w},b}^{x=a}$};

   \node at (2,-2) {All points in dash lines receive red or blue};

   \fill[red] (2.9,0.75) circle (2pt) node[above left]{$u$};

\draw[black][ dashed] (2.9,0.75) ellipse (3/7 and 1);

   \fill[blue] (4.5,0.75) circle (2pt) node [above left] {$w$};

\draw[black][ dashed] (4.5,0.75) ellipse (3/7 and 1);


\end{tikzpicture}
\end{center}
\end{figure}

\begin{figure}[htbp]\label{fig:desk}

    \begin{center}

\begin{tikzpicture}[scale=1.6]

\draw[black][ dashed] (3.2,0) ellipse (6/7 and 2);
\path[fill=gray!30] (3.2,0) ellipse (6/7 and 2);

\fill[red] (3.2,0) circle (2pt) node[below] {(0,0,0)};
    \fill[blue] (4.8,0) circle (2pt) node[below] {(a,0,0)} ;

   \draw[black][ dashed] (3.2,0) ellipse (3/7 and 1);
    \draw[black][dashed] (4.8,0) ellipse (3/7 and 1);

\draw (3.2,0) -- (4.8,0);
\draw (3.2,0) -- (2.9,0.75);
     \draw (4.8,0) -- (2.9,0.75);

     \node at (4.8,-1.2) {$\mathbb{S}_{\boldsymbol{a}_{1},b}^{x=a}$};

    \node at (2.7,2) {$\mathbb{S}_{\boldsymbol{u},b}^{x=0}$};
     \node at (4.5,2) {$\mathbb{S}_{\boldsymbol{w},b}^{x=a}$};
  
   \fill[red] (2.9,0.75) circle (2pt) node[above left]{$u$};

\draw[black][ dashed] (2.9,0.75) ellipse (3/7 and 1);
  
   \fill[blue] (4.5,0.75) circle (2pt) node [above left] {$w$};

\draw[black][ dashed] (4.5,0.75) ellipse (3/7 and 1);

    \draw[black][dashed] (4.8,0) ellipse (3/7 and 1);

\node at (3.2,-1.5) {$\mathbb{D}_{\boldsymbol{0},2b}^{x=0}$};

\end{tikzpicture}

\end{center}
\caption{After repeated rotation, all points on the gray disk receive red or blue }
  \label{fig:myfigure}
\end{figure}

Indeed, each point in the sphere $\mathbb{S}_{\boldsymbol{0},b}^{x=0}$ must be colored either red or blue, because any such point $\boldsymbol{p}$ together with $\boldsymbol{0}$ and $\boldsymbol{a}_{1}$ form a copy of $T$, which cannot be rainbow. Similarly, for each point $(a,y,z)$ in the sphere $\mathbb{S}_{\boldsymbol{a}_{1},b}^{x=a}$, its color should be different from the color of the corresponding point $(0,y,z)\in\mathbb{S}_{\boldsymbol{0},b}^{x=0}$. More specifically, if $(0,y,z)$ in $\mathbb{S}_{\boldsymbol{0},b}^{x=0}$ is colored red (blue), then $(a,y,z)$ in $\mathbb{S}_{\boldsymbol{a}_{1},b}^{x=a}$ is colored blue (red). For every pair of points $\boldsymbol{u}:=(0,y,z)$ and $\boldsymbol{w}:=(a,y,z)$, we can repeat the same procedure by constructing two spheres: $\mathbb{S}_{\boldsymbol{u},b}^{x=0}$ and $\mathbb{S}_{\boldsymbol{w},b}^{x=a}$ centered at $\boldsymbol{u}$ and $\boldsymbol{w}$ respectively. We can see that if a point lie in at least one of the above spheres, it should be colored blue or red. Moreover, observe that any point on the plane $\mathbb{E}^{x=0}$ within a distance of $2b$ from $\boldsymbol{0}$ must be colored red or blue, because it is contained in at least one of the spheres in the collection $\{{\mathbb{S}_{\boldsymbol{u},b}^{x=0}}:{\boldsymbol{u}\in \mathbb{S}_{\boldsymbol{0},b}^{x=0}}\}$ (see, Figure~\ref{fig:desk}). By continuing this operation, we eventually conclude that all points on $\mathbb{E}^{x=0}:=\{\boldsymbol{v}=(x,y,z)\in\mathbb{E}^{3}:x=0\}$ must be colored either red or blue. 

Since $\mathbb{E}^{x=0}$ is a two-dimensional space, according to Lemma~\ref{lem:MonoRightTriangle}, there must exist a monochromatic copy of the right triangle $T$. Thus, the proof is complete.

\subsection{Proofs of the general results}\label{subsec:general}

Here we prove Theorems~\ref{thm:off-triangle} and~\ref{thm:highsimplex} respectively.

\begin{proof}[Proof of Theorem~\ref{thm:off-triangle}]
Let $T_{h}$ be the triangle with points $\boldsymbol{a},\boldsymbol{b},\boldsymbol{c}\in\mathbb{E}^{n}$ satisfying $|\boldsymbol{a}\boldsymbol{b}|=t$ and the distance from the point $\boldsymbol{c}$ to line $\boldsymbol{a}\boldsymbol{b}$ is $h$. Then we can find a point $\boldsymbol{d}$ in line $\boldsymbol{a}\boldsymbol{b}$ such as $|\boldsymbol{c}\boldsymbol{d}|=h$ and $\boldsymbol{cd}$ orthogonal to $\boldsymbol{ab}$. For any $r$-coloring of $\mathbb{E}^{n}$, suppose that there is no rainbow copy of $T_{h}$, then by Proposition~\ref{prop:TwoPoint} and without loss of generality, we can first assume $\boldsymbol{a}=(0,0,\ldots,0)$ and $\boldsymbol{b}=(t,0,\ldots,0)$ are colored red and blue. Next, we rotate $\boldsymbol{c}$ with respect to $\boldsymbol{ab}$, more precisely, if $\boldsymbol{d}=(|\boldsymbol{ad}|,0,\ldots,0)$, we consider the sphere $\mathbb{S}^{n-2}(h)\subseteq  \mathbb{E}^{x=|\boldsymbol{ad}|}$ centered at $\boldsymbol{d}\in\mathbb{E}^{n}$ and with radius $h$. Note that the points in this sphere should be colored red or blue, otherwise, we can find a rainbow triangle $T_{h}$. Thus, by the condition $\mathbb{S}^{n-2}(h)\overset{2}{\rightarrow} X$, we can find a monochromatic copy of $X$ in $\mathbb{S}^{n-2}(h)\subseteq \mathbb{E}^{x=|\boldsymbol{ad}|}$. If $\boldsymbol{d}=(-|\boldsymbol{ad}|,0,\ldots,0)$, we can use a similar argument and omit the repeated details.  
\end{proof}

\begin{proof}[Proof of Theorem~\ref{thm:highsimplex}]
For any $r$-coloring $\chi$ of $\mathbb{E}^{n}$, suppose there is no monochromatic $X$, our goal is to find a rainbow copy of configuration $\Delta$. In this case, our strategy is to build a rainbow copy of $\Delta$ iteratively. First, by Proposition~\ref{prop:TwoPoint}, there is a two-point set $\{\boldsymbol{x}_{1}',\boldsymbol{x}_{2}'\}$ in $\mathbb{E}^{n}$, which is isometric to $\{\boldsymbol{x}_{1},\boldsymbol{x}_{2}\}\subseteq \Delta$. Fix the set $\{\boldsymbol{x}_{1}',\boldsymbol{x}_{2}'\}$, we then consider the set of points $Y_{3}:=\{\boldsymbol{y}\in\mathbb{E}^{n}:|\boldsymbol{x}_{1}'\boldsymbol{y}|=|\boldsymbol{x}_{1}\boldsymbol{x}_{3}|,|\boldsymbol{x}_{2}'\boldsymbol{y}|=|\boldsymbol{x}_{2}\boldsymbol{x}_{3}|\}$, which can be viewed as an intersection of two spheres. Note that, indeed $Y_{3}$ itself is an $(n-2)$-dimensional sphere with radius $h_{1}$ centered at some point on line $\boldsymbol{x}_{1}\boldsymbol{x}_{2}$. Now, suppose that all points in $Y_{3}$ receive either $\chi(\boldsymbol{x}_{1})$ or $\chi(\boldsymbol{x}_{2})$, then the condition $\mathbb{S}^{n-2}(h_{1})\overset{2}{\rightarrow} X$ implies the existence of a monochromatic copy of $X$, a contradiction. Therefore, there must be some point, namely $\boldsymbol{x}_{3}'\in Y_{3}$, such that $\chi(\boldsymbol{x}_{3}')\notin\{\chi(\boldsymbol{x}_{1}'),\chi(\boldsymbol{x}_{2}')\}$. Next, fix the set $\{\boldsymbol{x}_{1}',\boldsymbol{x}_{2}',\boldsymbol{x}_{3}'\}$, we can use the similar argument to find a point $\boldsymbol{x}_{4}'$ such that the distance from $\boldsymbol{x}_{4}'$ to the affine space spanned by $\{\boldsymbol{x}_{1}',\boldsymbol{x}_{2}',\boldsymbol{x}_{3}'\}$ is $h_{2}$ , $\chi(\boldsymbol{x}_{4}')\notin\{\chi(\boldsymbol{x}_{1}'),\chi(\boldsymbol{x}_{2}'),\chi(\boldsymbol{x}_{3}')\}$ and $|\boldsymbol{x}_{4}'\boldsymbol{x}_{i}'|=|\boldsymbol{x}_{4}\boldsymbol{x}_{i}|$ for each $1\le i\le 3$. Continue this process and repeatedly take advantage of the conditions that $ \mathbb{S}^{n-1-j}(h_{j})\overset{j+1}{\rightarrow} X $
for each $1\leq j\leq k$, eventually we can find a rainbow isometric copy of $\Delta$ in $\mathbb{E}^{n}$, as desired.

\end{proof}

\section{Proofs of Theorems~\ref{thm:UnitSquare} and~\ref{thm:ell3} }\label{sec:MoreCombinatorial}
The previous proofs are mainly geometric, here we mix some more combinatorial ideas to show the results on the squares and the lines.

\subsection{Squares: Proof of Theorem~\ref{thm:UnitSquare}}
For any $r$-coloring $\chi$ of points in $\mathbb{E}^n$, it suffices to prove the case of the unit square since the space can be stretched and compressed. In this proof, we mainly take advantage of a special configuration in a $5$-dimensional space, more precisely, let $Q_{5}:=\frac{1}{\sqrt{2}}\{0,1\}^5=\{0,\frac{1}{\sqrt{2}}\}^5$, consists of all vectors of length $5$ with entries $0$ and $\frac{1}{\sqrt{2}}$. Note that $Q_{5}$ is rectangular with $\rho(Q_{5})=\frac{\sqrt5}{\sqrt8}<1$.

 We first assume that there is neither a monochromatic copy of the unit square 
 nor a rainbow copy of the unit square.
 By Proposition~\ref{prop:TwoPoint} and without loss of generality, we can assume that the points $(0,0,\ldots,0)$ and $(1,0,\ldots,0)$ in $\mathbb{E}^{n}$ are colored red and blue respectively. For any $\boldsymbol{x}\in \mathbb{E}^{n-1}$ with $|\boldsymbol{x}|=1$, one can easily check that the four points $(0,0,\ldots,0),(1,0,\ldots,0),(0,\boldsymbol{x}),(1,\boldsymbol{x})$ in $\mathbb{E}^{n}$ together form a copy of unit square. By our assumption, we know that this unit square is not rainbow, thus regarding the colors of points $(0,\boldsymbol{x})$ and $(1,\boldsymbol{x})$ in $\mathbb{E}^{n}$ under the coloring function $\chi$, there are at most five types. We list them as follows:
  \begin{itemize}
 \item type $1$, 
  $(0,\boldsymbol{x})$ is red
   \item type $2$, $(0,\boldsymbol{x})$ is blue
   \item type $3$, $(1,\boldsymbol{x})$ is red
   \item type $4$, $(1,\boldsymbol{x})$ is blue
   \item type $5$, 
 $(0,\boldsymbol{x})$ and 
   $(1,\boldsymbol{x})$ receive the same color
   
 \end{itemize}
Note that, several types can occur simultaneously, for example, both $(0,\boldsymbol{x})$ and $(1,\boldsymbol{x})$ can be colored red, which indicates that types $1$, $3$ and $5$ occur simultaneously.
 Next, we focus on the $(n-2)$-dimensional sphere $\mathbb{S}^{n-2}(1)$ with radius $1$, and design an auxiliary $5$-coloring function $\gamma$ on $\mathbb{S}^{n-2}(1)$ according to the types. More precisely, for each point $\boldsymbol{x} \in \mathbb{S}^{n-2}(1)$, $\gamma(\boldsymbol{x})=c_{i}$ if the pair of vectors $(0,\boldsymbol{x})$ and $(1,\boldsymbol{x})$ satisfies the color type $i$ in the above list. In particular, if more than one types occur, we can arbitrarily choose one of them. Since $\rho(Q_{5})=\sqrt{\frac{5}{8}}<1$, by Theorem~\ref{thm:RodlFrankl}, there is a monochromatic $Q_{5}$ in $\mathbb{S}^{n-2}(1)$ under coloring function $\gamma$ if $n>C$ for some positive integer $C$.

Suppose that there is a monochromatic copy of $Q_{5}$ whose points are colored $c_{1}$, we can pick four points in this $Q_{5}$, namely, $\frac{1}{\sqrt{2}}(1,1,1,0,0)$, $\frac{1}{\sqrt{2}}(1,0,1,1,0)$, $\frac{1}{\sqrt{2}}(1,0,0,1,1)$, $\frac{1}{\sqrt{2}}(1,1,0,0,1)$. One can easily check that these points form a copy of a monochromatic unit square under the coloring $\gamma$. By definition of $\gamma$, this induces a red copy of unit square on the hyperplane $x_{1}=0$ under the coloring function $\chi$, a contradiction. Similar arguments hold when there is a monochromatic copy of $Q_{5}$ whose points are colored $c_{2}$, $c_{3}$ or $c_{4}$ under the coloring $\gamma$, we omit the repeated details.

Therefore, we can assume that there is a monochromatic copy of $Q_{5}$ whose points are colored $c_{5}$. Our strategy is designing a new coloring function $\delta$ on $Q_{5}$ in some $5$-dimensional subspace according to the original coloring function $\chi$ of $\mathbb{E}^{n}$. More precisely, let $Q\subseteq\mathbb{S}^{n-2}(1)$ be a monochromatic isometric copy of $Q_{5}$ under the isometric mapping $\Phi: Q_{5}\rightarrow Q$, then for any $\boldsymbol{x}\in Q\subseteq\mathbb{S}^{n-2}(1)$, if the points $(0,\boldsymbol{x})$ and $(1,\boldsymbol{x})$ in $\mathbb{E}^{n}$ receive color $t_{i}$ under coloring function $\chi$, then we color the point $\boldsymbol{x}':=\Phi^{-1}(\boldsymbol{x})$ by $\delta(\boldsymbol{x}')=t_{i}$. By the definition of the coloring function $\delta$, if any of the following cases occur, then our objective is achieved, as we can find a monochromatic or rainbow copy of a unit square in $\mathbb{E}^{n}$ under the original coloring function $\chi$.
 \begin{Case}
     \item There are two points $\boldsymbol{x},\boldsymbol{y}\in Q_{5}$ with $|\boldsymbol{xy}|=1$ receiving same color under the function $\delta$. In this case, four points $(0,\Phi(\boldsymbol{x})), (1,\Phi(\boldsymbol{x})), (0,\Phi(\boldsymbol{y})),(1,\Phi(\boldsymbol{y}))$ together form a monochromatic copy of unit square in $\mathbb{E}^{n}$ under the coloring function $\chi$.

     \item There are four points $\boldsymbol{a}$, $\boldsymbol{b}$, $\boldsymbol{c}$, and $\boldsymbol{d}$ forming a rainbow copy of unit square in $Q_{5}$ under the coloring function $\delta$. In this case, four points    
     $(0,\Phi(\boldsymbol{a})),(0,\Phi(\boldsymbol{b})),(0,\Phi(\boldsymbol{c})),(0,\Phi(\boldsymbol{d}))$ together form a rainbow copy of unit square in $\mathbb{E}^{n}$ under the coloring function $\chi$.   
 \end{Case}

It suffices to prove that, for any coloring function $\delta$ on $Q_{5}$, at least one of the above cases occurs, to achieve this, we can only consider an even smaller subset of $Q_{5}$. More precisely, let $Q_{5}^{(3)}\subseteq Q_{5}$ be a collection of vectors in $Q_{5}$ with exactly $3$ non-zero entries. For convenience, we will use a sequence of length $3$ to represent each member in $Q_{5}^{(3)}$, for example, we use $135$ to represent $\frac{1}{\sqrt{2}}(1,0,1,0,1)$, where three entries $1$ are in the positions $1$, $3$ and $5$. Note that the Euclidean distance between any two distinct points in $Q_{5}$ is determined by the number of positions at which the corresponding symbols are distinct. For example, $123$ and $124$ are at distance $1$ since there are two such positions between them, and $123$ and $145$ are at distance $\sqrt{2}$ since there are four such positions.

Suppose that under the coloring function $\delta$ on $Q_{5}$, none of Case 1 or Case $2$ occurs, we can see that the set of points $\{123,234,124,134\}$ form a copy of regular tetrahedron with side length one, thus $\delta(123)$, $\delta(234)$, $\delta(123)$ and $\delta(134)$ are pairwise distinct, otherwise Case 1 occurs. Furthermore, we consider a copy of the unit square formed by the set of points $\{123,125,234,245\}$. This unit square cannot be rainbow, otherwise, Case 2 occurs, moreover, $\delta(125)\neq \delta(245)$, $\delta(125)\neq\delta(123)$, $\delta(123)\neq\delta(234)$ and $\delta(234)\neq\delta(245)$, otherwise Case 1 occurs. Thus, at least one of $\delta(125)=\delta(234)$ and $\delta(123)=\delta(245)$ shall occur. By symmetry, we can assume that $\delta(125)=\delta(234)$. We consider another unit square formed by $\{124,125,134,135\}$, similarly, this unit square is also not rainbow, which implies that at least one of the cases $\delta(124)=\delta(135)$ and $\delta(125)=\delta(134)$ shall occur. Suppose that $\delta(125)=\delta(134)$, then $\delta(125)\neq\delta(234)$ as $\delta(134)\neq\delta(234)$, a contradiction, thus we have $\delta(124)=\delta(135)$. Continue by considering the unit square formed by $\{123,135,234,345\}$, similarly we can see at least one of the cases $\delta(123)=\delta(345)$ and $\delta(135)=\delta(234)$ shall occur. Suppose that $\delta(135)=\delta(234)$, then we have $\delta(135)\neq\delta(124)$ as $\delta(124)\neq\delta(234)$, a contradiction, therefore we have $\delta(123)=\delta(345)$. 

The final unit square we shall consider is formed by $\{124,134,245,345\}$, for the similar reasons, at one of the cases $\delta(134)=\delta(245)$ and $\delta(124)=\delta(345)$ should occur. However, it is impossible that $\delta(124)=\delta(345)$ because we already show that $\delta(124)\neq\delta(123)$ and $\delta(123)=\delta(345)$. By the above analysis, as $\delta(125)\neq\delta(123)=\delta(345)$ and $\delta(135)\neq\delta(134)=\delta(245)$, then we can find a rainbow copy of unit square formed by $\{125,135,245,345\}$, the proof is finished.

\subsection{Lines: Proof of Theorem~\ref{thm:ell3}}
Assume $\chi_{0}$ is a spherical coloring of $\mathbb{E}^n$, we are going to find a monochromatic copy of $\ell_{3}$ or a rainbow copy $\ell_{3}$ in $\mathbb{E}^n$. Firstly we define a new coloring $\chi$ of the set of non-negative real numbers $\mathbb{R}^{\geq 0}$ as $\chi(|\boldsymbol{x}|^2)=\chi_{0}(\boldsymbol{x})$ for all $\boldsymbol{x}\in \mathbb{E}^n$, where $|\boldsymbol{x}|$ is the distance between $\boldsymbol{x}$ and $\boldsymbol{0}$. Then we call a triple  $({y}_{1},{y}_{2},{y}_{3})\in \mathbb{R}^{\geq 0}\times \mathbb{R}^{\geq 0}\times \mathbb{R}^{\geq 0}$ is \emph{potential} if there is a line $\ell_{3}$ formed by $\boldsymbol{x}_{1},\boldsymbol{x}_{2},\boldsymbol{x}_{3}$ such that $|\boldsymbol{x}_{i}|^2=y_{i}$ for $1\leq i\leq 3$.  

Motivated by a recent work of Conlon and Wu~\cite{conlon2022more}, we can see that if $\boldsymbol{x}_{1},\boldsymbol{x}_{2},\boldsymbol{x}_{3}\in\mathbb{E}^{n}$ form a copy of $\ell_{3}$, then we have $|\boldsymbol{x}_{1}|^{2}=|\boldsymbol{x}_{2}|^{2}+1-2|\boldsymbol{x}_{2}|\cos{\alpha}$ and $|\boldsymbol{x}_{3}|^{2}=|\boldsymbol{x}_{2}|^{2}+1+2|\boldsymbol{x}_{2}|\cos{\alpha}$, where $\alpha$ is the angle $\boldsymbol{x}_{1}\boldsymbol{x}_{2}\boldsymbol{0}$. Those together imply that $|\boldsymbol{x}_{1}|^{2}+|\boldsymbol{x}_{3}|^{2}=2|\boldsymbol{x}_{2}|^{2}+2$.
Hence, if ${y_{1},y_{2},y_{3}}$ satisfy $y_{1} + y_{3} = 2y_{2} + 2$ and $\sqrt{y_{2}} \ge 2\max\{|y_{1}-y_{2}|,|y_{2}-y_{3}|,1\}$, then $(y_{1},y_{2},y_{3})$ is a potential triple. Moreover, as the coloring $\chi_{0}$ is a spherical coloring, then the existence of a monochromatic or rainbow potential triple $(y_{1},y_{2},y_{3})$ yields the existence of a monochromatic or rainbow copy of $\ell_{3}$. To see this, suppose such a triple $(y_{1},y_{2},y_{3})$ exists, then we can find a corresponding triple of points $(\boldsymbol{x}_{1},\boldsymbol{x}_{2},\boldsymbol{x}_{3})\in\mathbb{E}^{n}\times\mathbb{E}^{n}\times\mathbb{E}^{n}$, where $\boldsymbol{x}_{1}=(\sqrt{y_{2}}-\cos{\alpha},\sin{\alpha},0,\ldots,0)$, $\boldsymbol{x}_{2}=(\sqrt{y_{2}},0,0,\ldots,0)$ and $\boldsymbol{x}_{3}=(\sqrt{y_{2}}+\cos{\alpha}, -\sin{\alpha},0,\ldots,0)$. Therefore, it suffices to show the existence of a monochromatic or rainbow potential triple $(y_{1},y_{2},y_{3})$ under the coloring function $\chi$. 

Next, suppose there is neither a rainbow copy of $\ell_{3}$ nor a monochromatic copy of $\ell_{3}$, let $N\in \mathbb{Z}$ be a large enough integer and $a_{1}:=\chi (N)$, since $(N+1,N,N+1)$ is a potential triple, $a_{2}:=\chi(N+1)\neq\chi(N)=a_{1}$. Furthermore, $(N,N,N+2)$ and $(N+2,N+1,N+2)$ are two potential triples, hence $a_{3}:=\chi(N+2)$ should be distinct from $a_{1}$ and $a_{2}$. Therefore, if $r=2$, we are already done. Next, we assume that $r\ge 3$ and we have the following claim.

\begin{claim}\label{claim:mod}
For any integer $M\ge N$, we have $\chi(M)=a_{M-N+1 \pmod{3}}$.
\end{claim}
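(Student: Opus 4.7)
My plan is to proceed by strong induction on $M \geq N$. First, I would note that for every $k \geq 0$ the potential triples $(N+k+1, N+k, N+k+1)$ and $(N+k, N+k, N+k+2)$ cannot be monochromatic, which yields pairwise distinctness of any three consecutive values $\chi(N+k), \chi(N+k+1), \chi(N+k+2)$. In particular $\chi(N+3) \notin \{a_2, a_3\}$.

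The base cases $M = N, N+1, N+2$ are already established above the claim. The nontrivial base case is $\chi(N+3) = a_1$, and this will be the main obstacle: integer-only potential triples leave open the possibility that $\chi(N+3)$ is a new color $a_j$ with $j \geq 4$ (for instance, the periodic pattern $a_1, a_2, a_3, a_j, a_1, a_2, a_3, a_j, \ldots$ on the integers is compatible with every integer potential-triple constraint), so some non-integer input is required. To handle it, I would introduce the half-integer auxiliary value $b := \chi(N + \tfrac{1}{2})$ and consider the two non-integer potential triples $(N, N + \tfrac{1}{2}, N+3)$ and $(N+1, N + \tfrac{1}{2}, N+2)$; both satisfy $y_1 + y_3 = 2 y_2 + 2 = 2N+3$, together with the side condition $\sqrt{y_2} \geq 2 \max\{|y_1 - y_2|, |y_2 - y_3|, 1\}$ provided $N$ is chosen sufficiently large. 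Assuming for contradiction that $c_3 := \chi(N+3) \notin \{a_1, a_2, a_3\}$, the first triple has colors $(a_1, b, c_3)$ with $a_1 \neq c_3$, and being non-rainbow forces $b \in \{a_1, c_3\}$; the second triple has colors $(a_2, b, a_3)$ and forces $b \in \{a_2, a_3\}$. Since $c_3 \notin \{a_2, a_3\}$ these two sets are disjoint, a contradiction, so $c_3 = a_1$.

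For the inductive step, suppose the claim holds for all $N \leq M' \leq M$ with $M \geq N+3$; I would apply the potential triple $(M-3, M-2, M+1)$, which satisfies $y_1 + y_3 = 2(M-2)+2$. Being non-rainbow, it forces $\chi(M+1) \in \{\chi(M-3), \chi(M-2)\}$; by the induction hypothesis and the period-$3$ arithmetic of the indexing, these two values are respectively $\chi(M)$ and the target $a_{(M-N+2) \pmod 3}$. The non-monochromatic potential triple $(M+1, M, M+1)$ then yields $\chi(M+1) \neq \chi(M)$, pinning $\chi(M+1)$ to the desired color and closing the induction. Taking $N$ sufficiently large at the outset ensures the square-root side condition is met for every potential triple used throughout.
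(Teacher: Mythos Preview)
Your proof is correct and follows essentially the same approach as the paper. The decisive step---showing $\chi(N+3)=a_1$ by introducing the half-integer value $\chi(N+\tfrac12)$ and playing off the two potential triples $(N,N+\tfrac12,N+3)$ and $(N+1,N+\tfrac12,N+2)$---is identical to the paper's argument (which contains minor typos in the color sets but the same logic).

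Where you differ slightly is in the inductive step. The paper simply writes ``one can run a similar argument,'' implicitly repeating the half-integer trick at each stage. You instead observe that once the pattern is established on four consecutive integers, it propagates using only the \emph{integer} potential triple $(M-3,M-2,M+1)$: since $\chi(M-3)$ and $\chi(M-2)$ are distinct by induction, non-rainbowness forces $\chi(M+1)\in\{\chi(M-3),\chi(M-2)\}=\{\chi(M),a_{(M-N+2)\bmod 3}\}$, and $(M+1,M,M+1)$ non-monochromatic rules out the first option. This is a small but genuine simplification: after the single half-integer input at the base case, no further non-integer triples are needed, whereas the paper's phrasing suggests invoking $\chi(M-\tfrac52)$ or the like at every step.
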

\begin{poc}
We consider $\chi(N+3)$ and two other potential triples, namely, $(N+1,N+1/2,N+2)$ and $(N,N+1/2,N+3)$. Note that if $a_{4}:=\chi(N+3)\notin\{a_{1},a_{2},a_{3}\}$, then $\chi (N+1/2)$ satisfies either $\chi(N+1/2)\notin\{a_{1},a_{2}\}$ or $\chi(N+1/2)\notin\{a_{3},a_{4}\}$, which yields at least one of $(N+1,N+1/2,N+2)$ and $(N,N+1/2,N+3)$ is rainbow, a contradiction. Hence, $a_{4}\in\{a_{1},a_{2},a_{3}\}$. Moreover, we can see that $a_{4}\notin\{a_{2},a_{3}\}$, otherwise $(N+1,N+1/2,N+2)$ forms a rainbow potential triple, thus $a_{4}=a_{1}$. One can run a similar argument to complete the proof, we omit the repeated details.
\end{poc}
 We further consider the potential triples $(N+1/3,N+1+1/3,N+2+1/3)$ and $(N+2/3,N+1+2/3,N+2+2/3)$, and their colorings $(b_{1},b_{2},b_{3}):=(\chi(N+1/3),\chi(N+1+1/3),\chi(N+2+1/3))$ and $(c_{1},c_{2},c_{3}):=(\chi(N+2/3),\chi(N+1+2/3),\chi(N+2+2/3))$, for the same reasons as above, we have $b_{i}\neq b_{j}$ and $c_{i}\neq c_{j}$ for $i\neq j$. Similar as Claim~\ref{claim:mod}, for any integer $M\ge N$, we have $\chi(M+1/3)=b_{M-N+1 \pmod{3}}$ and $\chi(M+2/3)=c_{M-N+1 \pmod{3}}$.

Note that, all of the triples $(N+1/3,N,N+1+2/3)$, $(N,N+1/3,N+2+2/3)$ and $(N,N+2/3,N+3+1/3)$ are potential, the corresponding colors are $(b_{1},a_{1},c_{2})$, $(a_{1},b_{1},c_{3})$ and $(a_{1},c_{1},b_{1})$, respectively. As there is no rainbow copy of $\ell_{3}$, then we have $a_{1}=b_{1}$. Furthermore, all of $(N+2/3,N,N+1+1/3)$, $(N+3,N+2/3,N+1/3)$ and $(N+6,N+2+1/3,N+2/3)$ are potential, since there is no rainbow copy of $\ell_{3}$, similarly, we have $a_{1}=c_{1}$. Then the potential triple $(N,N+2/3,N+3+1/3)$ yields a monochromatic copy of $\ell_{3}$, a contradiction. Then the proof is finished.

\begin{center}
\begin{figure}

\begin{tikzpicture}[scale=0.8]
    \draw[-] (-10,0) -- (10,0);
    
    \definecolor{red}{RGB}{255,0,0}
    \definecolor{blue}{RGB}{0,0,255}
    \definecolor{green}{RGB}{0,128,0}
    
    \foreach \x/\color/\label in {-9/red/N, -7/blue/N+1, -5/green/N+2, -3/red/N+3, -1/blue/N+4, 1/green/N+5, 3/red/N+6, 5/blue/N+7, 7/green/N+8, 9/red/N+9} {
        \node[circle, fill=\color, inner sep=2.5pt] at (\x,0) {};
        \node[below] at (\x,0) {\label};

    }
     \draw (-7.66,0) circle (0.1) node [above]{$c_{1}$};
     \fill[red](-7.66,0) circle (0.15);
     \draw (-5.66,0) circle (0.1)node [above]{$c_{2}$};
     \draw (-3.66,0) circle (0.1)node [above]{$c_{3}$};
     \draw (-1.66,0) circle (0.1)node [above]{$c_{1}$};
     \draw (0.33,0) circle (0.1);
     \draw (2.33,0) circle (0.1);
     \draw (4.33,0) circle (0.1);
     \draw (6.33,0) circle (0.1);
     \draw (8.33,0) circle (0.1);

     \draw (-8.33,0) circle (0.1)node [above]{$b_{1}$};
     
     \draw (-6.33,0) circle (0.1)node [above]{$b_{2}$};
     \draw (-4.33,0) circle (0.1)node [above]{$b_{3}$};
     \draw (-2.33,0) circle (0.1)node [above]{$b_{1}$};
\fill[red](-2.33,0) circle (0.15);
     
     \draw (-0.33,0) circle (0.1);
     \draw (1.66,0) circle (0.1);
     \draw (3.66,0) circle (0.1);
     \draw (5.66,0) circle (0.1);
     \draw (7.66,0) circle (0.1);

\end{tikzpicture}
\caption{By our analysis, $N$, $N+2/3$ and $N+3+1/3$ should receive the same color}
\end{figure}
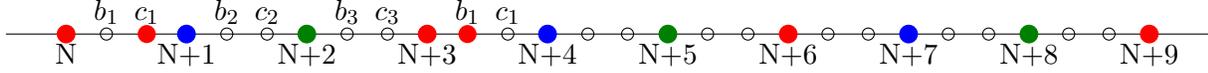

\end{center}
\begin{rmk}
Here we simply explain that the similar result restricted on the spherical coloring as Theorem~\ref{thm:ell3}, does not hold for $\ell_{m}$ with any $m\ge 4$.
Indeed, Erd\H{o}s, Graham, Montgomery, Rothschild, Spencer and Straus~\cite{1973JCTA} provided an explicit coloring function on $\mathbb{E}^{n}$ with $3$ colors to show that $\mathbb{E}^n \overset{r=3}{\nrightarrow} \ell_{4}$. Note that $\ell_{4}$ contains $4$ points, thus if $r\ge 3$, we have $\mathbb{E}^n \overset{r}{\nrightarrow} (\ell_{4};\ell_{4})$. Moreover, there is another spherical coloring to $\mathbb{E}^n$ by coloring the points $\boldsymbol{x}\in \mathbb{E}^n$ with color $\left \lfloor |\boldsymbol{x}|^2\right \rfloor$ (mod $4$) giving that $\mathbb{E}^n \overset{r=4}{\nrightarrow} \ell_{3}$. This implies, restricted on spherical coloring, $\mathbb{E}^n \overset{r\geq 4}{\nrightarrow} (\ell_{3};P)$ holds for any configuration $P$ with at least $5$ points.  
\end{rmk}

\section{Non-Gallai-Ramsey: Proof of Theorem~\ref{thm:coloringGeneral}}\label{sec:nonGallaiRamsey}

Suppose $X$ has positive box-width $a$, then we first divide $\mathbb{E}^{n}$ into several blocks, namely, $\mathcal{B}_{i}:=[(i-1)a,ia)\times\mathbb{E}^{n-1}$, where $i\in\mathbb{Z}$. Then we color all points in block $\mathcal{B}_{i}$ using color $j$, where $j\equiv i\pmod{(\lceil \frac{b}{a}\rceil+1)}$. It remains to show that under this $(\lceil \frac{b}{a}\rceil+1)$-coloring of $\mathbb{E}^{n}$, there is neither a monochromatic copy of $X$ nor a rainbow copy of $P$. First, note that $X$ cannot be embedded into any single block $[(i-1)a,ia)\times\mathbb{E}^{n-1}$ as the box-width of $X$ is exactly $a$. Thus, any monochromatic copy of $X$ should overlap at least two distinct blocks with the same color. However, it is impossible, to see this, suppose there is some monochromatic copy of $X$ overlapping $\mathcal{B}_{1}$ and $\mathcal{B}_{2}$, by definition of the coloring function, any pair of points $\boldsymbol{b}_{1}\in\mathcal{B}_{1}$ and $\boldsymbol{b}_{2}\in\mathcal{B}_{2}$ are at a distance larger than $b$, but the diameter of the configuration $X$ is exactly $b$. Similarly, suppose that there is a rainbow copy of $P$ with $t_{1}$ points, $P$ must overlap at least $t_{1}$ many distinct blocks to receiving $t_{1}$ distinct colors, which is also impossible because the diameter of $P$ is at most $(t_{1}-2)a$. The proof of the first part then is finished.

If the box-width of $X$ is $0$, and the affine dimension of $X$ is $m<n$, then $X$ contains an affine hull with dimension $m<n$. Since $X$ is a finite configuration with affine dimension $m$, then for any $(n-m+1)$-dimensional subspace $V$ of $X$, the projection of $X$ on the subspace $V$ has a positive diameter. 

Let $g_{X}>0$ be the minimal diameter among all possible projections of $X$ on $(n-m+1)$-dimensional subspaces. Note that such minimal diameter $g_{X}$ always exists, since the set of $(n-m+1)$-dimensional subspaces of $\mathbb{E}^{n}$ is compact (also called Grassmannian, see~\cite{Lee}), and the image on $\mathbb{E}$ of a compact set under continues map always contains a minimal element. 

Then it suffices to carefully construct a collection of blocks according to the structural properties of the projection of $X$ on $V$.
 Set $h:=\frac{g_{X}}{\sqrt{n-m+1}}$, we can construct a collection of blocks $\{\mathcal{C}_{(i_{1},i_{2},\ldots,i_{n-m+1})}\}_{(i_{1},i_{2},\ldots,i_{n-m+1})\in\mathbb{Z}^{n-m+1}}$ as 
\begin{equation*}
    \mathcal{C}_{(i_{1},i_{2},\ldots,i_{n-m+1})}:=[(i_{1}-1)h,i_{1}h)\times\cdots\times [(i_{n-m+1}-1)h,i_{n-m+1}h)\times\mathbb{E}^{m-1}.
\end{equation*}
Then we color all points in $\mathcal{C}_{(i_{1},i_{2},\ldots,i_{n-m+1})}$ by colors $(j_{1},j_{2},\ldots,j_{n-m+1})$, where $j_{s}\equiv i_{s}\pmod{(\lceil \frac{b}{h}\rceil+1)}$.

We claim that for any monochromatic isometric copy of $X$, it overlaps at least two distinct blocks. To see this, consider an arbitrary projection of $X$ on an $(n-m+1)$-dimensional space $V$, the diameter of the projection of $X$ on $V$ is at least $g_{X}$. However, for any pairs of points $(\boldsymbol{p}_{1},\boldsymbol{p}_{2})$ in the same block, the distance of the corresponding pair of points on the subspace $V$ is smaller than $g_{X}$. By definition of the collection of blocks and the colorings, we can see the distance of any pair of points receiving the same color in distinct blocks is larger than $b$, therefore, there is no monochromatic copy of $X$.

Then we turn to consider the configuration $P$. Suppose that there is a rainbow copy of $P$, we claim that the number of distinct indices $i_{1}$ in all blocks that $P$ overlaps are at most $t_{2}+1$, otherwise there are two points in this rainbow copy of $P$ whose distance is larger than $t_{2}h$, a contradiction. This claim obviously holds for any position $i_{s}$, $2\le s\le n-m+1$. Therefore, the total number of possible distinct sequences of indices $(i_{1},i_{2},\ldots,i_{n-m+1})$ of blocks that $P$ overlaps is at most $(t_{2}+1)^{n-m+1}$, which is smaller than the number of points of $P$, yields there is no rainbow copy of $P$. The proof is finished.

\section{Concluding remarks}\label{sec:Remarks}
Our preliminary goals are to strengthen and extend the results in~\cite{2022arxivEGR}, as outlined in Theorem 1.1. We achieve partial progress toward these goals and our proofs mix some geometric observations and operations such as the rotation method and some combinatorial ideas. We believe that this framework can be further leveraged and extended to this problem and its related variations. For example, as pointed out by Debsoumya Chakraborti and Minho Cho (private communication), the method can be applied to find some monochromatic high dimensional spheres when we forbid some certain rainbow configurations. In conclusion, we provide a comprehensive summary of our new results and propose additional avenues for exploration.
\begin{itemize}
    \item We show that for any right triangle $T$, positive integers $n\ge 3$ and $r$, we have $\mathbb{E}^{n}\overset{r}{\rightarrow} (T;T)_{\textup{GR}}$, which extends the result in Theorem~\ref{thm:MaoWang}~(1) on a very special right triangle. Furthermore, we show that for general triangle $T$, whose height is larger than circumradius, $\mathbb{E}^{n}\overset{r}{\rightarrow} (T;T)_{\textup{GR}}$ holds for $n\ge n(T)$. We wonder whether similar results hold in $3$-dimensional Euclidean space for any fixed triangle.
\item Note that in Theorem~\ref{thm:MaoWang}~(2), $n$ grows linearly in the number of colors $r$, we suspect that for any rectangle $Q$, $\mathbb{E}^n\overset{r}{\rightarrow} (Q;Q )_{\textup{GR}}$ holds, where $n$ is independent of $r$. In particular, We prove that for any positive integer $r$ and any square $\square$, there is some absolute number $n$ such that $\mathbb{E}^n\overset{r}{\rightarrow} (\square ;\square )_{\textup{GR}}$, which perhaps provides some evidence to our conjecture. 

\item Theorem~\ref{thm:MaoWang}~(3) shows some non-Gallai-Ramsey results on certain rectangles, we extend this result to a much larger family of configurations in Theorem~\ref{thm:coloringGeneral}. Our coloring functions there are explicit, and we think it would be interesting to seek more non-Gallai-Ramsey typle results via mixing some ideas from both probabilistic and deterministic approaches.

\item Theorem~\ref{thm:MaoWang}~(4) and (5) present some off-diagonal Gallai-Ramsey results, the proofs mainly rely on some known results on graph Gallai-Ramsey problems. We also extend it to a more flexible form in Corollary~\ref{cor:sphere}, in particular, the corresponding dimensions in our results could be an absolute number. Improving the quantitative bound on the dimensions is also of interest.
\end{itemize}

\section*{Acknowledgement}
Zixiang Xu would like to thank Debsoumya Chakraborti and Minho Cho for helpful discussions and thank Xihe Li for introducing some basics on Gallai-Ramsey type problems.

\bibliographystyle{abbrv}
\bibliography{gallairamsey}
\end{document}